\definecolor{CGcol}{rgb}{1.,0.,0.}
\newcommand{\dee}{{\rm d}}
\newcommand{\Vee}{\mathrm{V}}
\DeclareMathOperator{\Div}{div}
\DeclareMathOperator{\Id}{Id}
\newcommand{\R}{\mathbb{R}}
\newcommand{\w}{\mathrm{w}}
\newcommand{\wmdw}{\partial^\bullet_\w}
\newcommand{\mdp}[1]{\overset{\bullet \phi}{#1}}
\newcommand{\mdplong}[1]{\overbrace{#1}^{\bullet \phi}}
\newcommand{\wmdp}{\partial^\bullet_\phi}
\newcommand{\VO}{\mathcal{V}}
\newcommand{\HO}{\mathcal{H}}
\newcommand{\changed}[1]{{#1}}
\newtheorem{theorem}{Theorem}[section]
\newtheorem{corollary}[theorem]{Corollary}
\newtheorem{lemma}[theorem]{Lemma}
\newtheorem{proposition}[theorem]{Proposition}
\newtheorem{definition}[theorem]{Definition}
\newtheorem{remark}[theorem]{Remark}
\newtheorem{problem}[theorem]{Problem}
\newtheorem{assumption}[theorem]{Assumption}
\title{An evolving space framework for Oseen equations on a moving domain}
\date{\today}
\newcommand{\thanksAD}{
    This research has been
    funded %
    by Deutsche Forschungsgemeinschaft (DFG)
    through the grant CRC 1114: “Scaling Cascades in Complex Systems”,
    Project Number 235221301
}
\newcommand{\thanksPH}{
    P.J.H. acknowledges the support of EPSRC (grant EP/W005840/1).
}
\newcommand{\thanksCG}{
    This research has been
    supported %
    by Deutsche Forschungsgemeinschaft (DFG)
    through the grant CRC 1114: “Scaling Cascades in Complex Systems”,
    Project Number 235221301
}
\author[1]{Ana Djurdjevac\thanks{\thanksAD}}
\affil[1]{Freie Universit\"at Berlin,
  Institut f\"ur Mathematik,
  Arnimallee 6, 14195 Berlin}
\author[2]{Carsten Gr\"aser\thanks{\thanksCG}}
\affil[2]{
  Friedrich-Alexander-Universit\"at Erlangen-N\"urnberg,
  Department of Mathematics,
  Cauerstraße 11, 91058 Erlangen
}
\author[3]{Philip J. Herbert\thanks{\thanksPH}}
\affil[3]{Department of Mathematics, University of Sussex, Brighton, BN1 9RF, United Kingdom
}
\affil[ ]{adjurdjevac@zedat.fu-berlin.de, graeser@math.fau.de, p.herbert@sussex.ac.uk}
\begin{document}
\maketitle
\begin{abstract}
This article considers non-stationary incompressible linear fluid equations in a moving domain.
We demonstrate the existence and uniqueness of an appropriate weak formulation of the problem by making use of the theory of time-dependent Bochner spaces.
It is not possible to directly apply established evolving Hilbert space theory due to the incompressibility constraint.
After we have established the well-posedness, we derive and analyse a first order time discretisation of the system.
\end{abstract}

\section{Introduction}
The study of Partial Differential Equations (PDEs) in or on moving domains has received much attention recently, particularly the case of moving surfaces.
This is not only due to the interesting analysis required but also the range of applications where it is necessary to consider the motion of a domain for a more accurate model.
Typical applications include biological and physical phenomena, including pattern formation \cite{VenTosEam11,BarEllMad11, EllSti10A, EllSti10B} and the
modelling of surfactants in multi-phase flow \cite{GarLamSti14,DunLamSti19}.

Classically the study of PDEs on moving domains involves pulling the equations back onto a given stationary reference domain.
The work \cite{Vie14} introduced a moving space framework whereby the equations are posed in time-dependent Bochner spaces.
Shortly after, \cite{AlpEllSti15A} extended this to general Hilbert spaces, with examples given in \cite{AlpEllSti15B} and to Banach spaces in \cite{ACDJE21}.
Non-linear extensions have been considered in \cite{AlpEllTer18,AlpEll15} and non-local in \cite{AlpEll16}.
These examples and extensions however do not cover incompressible fluid equations.

We are motivated by the many physical situations in which one finds a moving domain in a fluid, examples range from large scale engineering
	 \cite{heinrich1992nonlinear},  computational geophysics, 
  in-cylinder flows in
internal  engines to  cardiovascular biomechanics \cite{lee2013moving,liu2020fluid, quarteroni2019mathematical}, etc.
For more examples see for instance \cite[Ch. 1]{shyy1995computational}.
Study of fluid equations in moving domains has been mathematically considered.
Some of the classical results are \cite{MiyTer82, InoWak77, bock1977navier, fujita1970existence}.
Those have been developed further by numerous authors such as \cite{moubachir2006moving, Saa06, Saa07,MiyTer82,InoWak77,FujSau70,Sal88}.
Note however that these approaches are based on the so-called method of pulling-back to the fixed domain.
The aim of this article is to utilise time-dependent Bochner spaces to provide a structure which can directly be used for proving the well-posedness results, without the need to transform to a fixed domain.
For the equations we consider, it is possible to pull back the PDE system to a reference space and apply standard theory on fixed domains after verifying the appropriate assumptions.
The approach we develop allows the problem to be considered in its natural formulation.
Handling the equations in natural spaces gives an elegance and simplicity which becomes particularly apparent in numerical implementation and finite element analysis \cite{EllRan20} of PDEs on evolving surfaces/domains, while also being an interesting mathematical problem.

This article extends the moving space framework of \emph{Alphonse, Elliott} and \emph{Stinner} to include the example of incompressible fluid equations.
We focus on a linear version of the Navier-Stokes Equations in a flat domain.
We will derive and analyse a first order time-discretisation of the equation we study.
We neglect the spatial discretisation.
However, we will design the time-stepping method such that
it is suited to an evolving finite element method,
   which relies on moving the mesh and basis functions
   together with the domain.
   In each time step this could be based on a classical
mixed Finite Element Method \cite{BofBreFor13}.

As commented in \cite{Dju18}, the challenging aspects of the incompressible fluid equations lie in the constraint that the velocity should remain divergence free.
If one uses a standard pullback to the reference domain, the divergence free constraint on the velocity becomes a differential constraint which depends on time.
In order to preserve the structure of the divergence constraint under domain deformations, one can use the so-called contravariant Piola transform.
This appears in \cite{MiyTer82}, \cite{bock1977navier}, and \cite{BurFreMas19}.
Both of these approaches have the difficulty that, instead of a moving domain, one has time dependent coefficients \changed{(c.f. \cite{bock1977navier})}.
This can prove challenging when ensuring appropriate approximation properties in a spatial discretisation.

We will use this contravariant Piola transform.
As a consequence of using a different pullback map, many of the necessary assumptions from \cite{AlpEllSti15A} need to be verified.
In addition, we consider a different pivot space from the typical choice of $L^2$, this means that there are some technical challenges which need to be addressed.

Discretisations for fluid equations in moving domains have been considered as early as the 1990s, where \cite{demirdvzic1990finite} considers a finite volume discretisation for Navier-Stokes in a moving domain.
The article \cite{tezduyar2004finite} provides an overview of of methods for discretisation of fluid dynamics with evolving domains.
Let us note \cite{BurFreMas19}, which considers a method for discretising a non-stationary Stokes in a moving domain using geometrically unfitted meshes and ghost penalties.

Voulis and Reusken \cite{VouReu18} consider an interface problem for a non-stationary Stokes equation, they show well-posedness and give space-time finite element discretisations.
We note that, while they have an evolving interface, their fluid is contained within a stationary domain, therefore they do not need to use evolving space methods.
It is worth commenting that the problem we consider may be considered as one of the phases of their interface problem.

Many of the evolving function space problems previously mentioned are for parabolic equations on surfaces.
We do not currently consider surface problems for the fluid equations, see \cite{KobLiuGig17,Miu17,JanOlsReu18,BraReuSch22} for such models.
We point out the article \cite{OlsReuZhi22} which considers the analysis
of a tangential Navier--Stokes system using a
similar evolving space methodology to that which we consider.
The primal evolving spaces are also defined using the
contravariant Piola transformation.
Notice, however, that this work differs from the presented one in three fundamental aspects:
First, \cite{OlsReuZhi22} considers 2-dimensional closed surfaces which does
not cover the case of flat 2- and 3-dimensional domains with (non-periodic)
  boundary conditions.
  Second, the dual evolving space of the Gelfand-triple
  in \cite{OlsReuZhi22} is defined using the adjoint (covariant) of the
  contravariant Piola transform, while we use the contravariant Piola
  itself which simplifies computations.
  Finally, \cite{OlsReuZhi22} proves well-posedness using a discretisation approach,
  whereas we apply the Banach-Ne\v cas-Babu\v ska Theorem.

It is furthermore worth noting that fluid equations on stationary hypersurfaces have been the subject of recent study in particular the numerical analysis of a Navier-Stokes equation \cite{ReuVoi18} and Stokes equations \cite{Reu20,OlsQuaReu18}.

\subsection{Formulation of the problem}
We are interested in the flow of an incompressible fluid in a moving domain.
For our purposes the flow of a fluid will be governed by a parabolic Oseen equation.
The parabolic Oseen equation is a linear version of the Navier--Stokes equations.
In the future, one may wish to consider the non-linear Navier--Stokes equations in this moving domain setting.

For simplicity we will consider that the moving domain is a subset of an open bounded hold-all domain in $\R^d$, $d=2,3$.
We will denote our moving domain by $\Omega(t)$, $t \in [0,T]$ for some $T>0$, assuming that it flows with a given, sufficiently smooth velocity field $\w$, which is divergence free.
The standard parabolic Stokes equations in this domain moving with $\w$ would be given as: find $(u,p)$ such that
\begin{equation}\label{eq:materialVelocityWSimpleEquation}
	\wmdw u - \mu \Delta u + \nabla p = 0,\, \Div u = 0 \mbox{ in }\Omega(t),
\end{equation}
where $\wmdw$ is a physical material derivative defined by $\wmdw u := u_t + (\w \cdot \nabla) u$ and corresponds to the movement of the domain induced by $\w$ and $\mu>0$ is the viscosity.
For simplicity, we will consider the case $\mu =1$, the analysis we present will be valid for $\mu\neq 1$, however the constants which appear will depend on $\mu$.
We will consider the more general case given by the parabolic Oseen equation: given $\Vee$, find $(u,p)$ such that
\begin{equation}\label{eq:ParabolicOseen}
	u_t + (\Vee\cdot\nabla)u - \Delta u + \nabla p = 0,\, \Div u = 0 \mbox{ in } \Omega(t),
\end{equation}
where, for $\Vee= \w$, one recovers the previous equation \eqref{eq:materialVelocityWSimpleEquation}; also see that for unknown $\Vee = u$, one is in the non-linear Navier--Stokes setting.
We note that one may also consider a porous medium term $c u$, where $c$ is a sufficiently regular scalar function and $c\geq 0$ almost everywhere.

Regarding the boundary conditions, we will consider non-zero Dirichlet boundary conditions, assuming that $u(t,\cdot)|_{\partial\Omega(t)} = u_d(t,\cdot) |_{\partial\Omega(t)}$, where $u_d$ is a given function which is divergence free and sufficiently smooth, i.e. it belongs to $H^1$.
Taking the particular choice $u_d  = \w$ effectively corresponds to a no-slip condition, other conditions are possible \cite{Mon13}.
We note that although the geometry of $\Omega(t)$ is determined entirely by the normal component of $\w$ on $\partial\Omega(t)$, when considering the no-slip condition the tangential component of $\w$ will affect the solution $u$.
This is exemplified by the Taylor-Couette flow between two concentric cylinders \cite{Tay23}.

We now introduce the problem we wish to study:

\begin{problem}\label{prob:NonZeroDirichlet}
	Given $\hat f$, $\hat u_0$, and $\Vee$, find velocity field $\hat u$ and pressure field $p$ such that
	\begin{align}
		\hat u_t + (\Vee \cdot \nabla) \hat u - \Delta \hat u + \nabla p &= \hat f \quad &\mbox{in }\cup_{t\in(0,T)}\{t\} \times \Omega(t),
		\\
		\Div \hat u &=0 \quad &\mbox{in }\cup_{t\in(0,T)}\{t\} \times \Omega(t),
		\\
		\hat u&= u_d \quad &\mbox{on } \cup_{t\in(0,T)}\{t\} \times \partial \Omega(t),
		\\
		\hat u|_{t=0} &= \hat u_0 \quad &\mbox{on } \{0\} \times \Omega(0).
	\end{align}
\end{problem}
In order to obtain zero Dirichlet boundary conditions, we consider $u := \hat{u}-u_d$, the above problem then becomes:
\begin{problem}\label{prob:ZeroDirichlet}
	Given $\hat f$, $\hat u_0$, and $\Vee$ find velocity field $u$ and pressure field $p$ such that
	\begin{align}
		u_t + (\Vee \cdot \nabla) u - \Delta u + \nabla p &= f: = \hat{f} - (u_d)_t - (\Vee \cdot \nabla )u_d + \Delta u_d \quad &\mbox{in }\cup_{t\in(0,T)}\{t\} \times \Omega(t),
		\\
		\Div u &=0 \quad &\mbox{in }\cup_{t\in(0,T)}\{t\} \times \Omega(t),
		\\
		u &= 0 \quad &\mbox{on } \cup_{t\in(0,T)}\{t\} \times \partial \Omega(t),
		\\
		u|_{t=0} &= u_0 := \hat u_0- u_d|_{t=0} \quad &\mbox{on } \{0\} \times \Omega(0).
	\end{align}
\end{problem}
Our goal in this article is to apply, where possible, the methods of \cite{AlpEllSti15A} to this system by considering the certain time-dependent Bochner spaces along with an appropriate transformation.

\subsection{Outline}

The structure of the paper is the following. 
We begin in Section \ref{sec:PreliminariesAssumptionsConsequences} by giving the conditions we require on the movement of our domain and briefly  introducing the  flow transformation of the domain.
This is followed by the construction of the evolving space structure in Section \ref{sec:EvolvingFunctionSpaces}. More precisely,  we introduce divergence free function spaces and the so-called Piola transformation which has the property of preserving the divergence-free structure.  Based on this setting and the approach from \cite{AlpEllSti15A},  in Section \ref{sec:Time-dependentBocnherSpaces} we define time-dependent Bochner spaces and material derivative (Section \ref{MaterialDerivative}), which are exploited in Section \ref{SolutionSpace} for the definition of the solution space.
Existence and uniqueness,  in a suitably weak sense, of Problem \ref{prob:NonZeroDirichlet}
are proved in Section \ref{sec:WellPosedness}.
We state and derive time discretisation strategy for Problem \ref{prob:NonZeroDirichlet}
in Section \ref{sec:TimeDiscretisation} for which we also prove convergence.

\section{Incompressible function spaces on moving domains}

\subsection{Assumptions on the evolution of the domain}

\label{sec:PreliminariesAssumptionsConsequences}

We now outline some requirements we have on the initial domain and domain velocity $\w$, these conditions are used in order to properly define the evolving Hilbert space structures which we will use.
The conditions we require for $\Vee$ are deferred until Assumption \ref{ass:AssumptionsForV}.
We will henceforth fix $D\subset \R^d$ a bounded open domain, $\Omega_0$ open and compactly contained in $D$ with $C^2$ boundary, and $\w \colon D \to \R^d$.

\begin{assumption}\label{ass:AssumptionsForW}
	Assume that there is $k\geq 2$ such that $\w \in C^1\left(\overline{(0,T)} ;C^k_c(D;\R^d)\right)$ with compact support in space.
	Also assume that $\Div \w = 0$ in $[0,T] \times D$.
\end{assumption}

We now define the transformation induced by $\w$.
\begin{definition}
	Let $\Phi\colon [0,T] \times D \to D$ be the solution of the ODE
	\begin{align}
		\frac{\dee}{\dee t}\Phi(t,\cdot) =& \w(t,\Phi(t,\cdot))& &\mbox{ in } D, \mbox{ for } t \in (0,T),
		\\
		\Phi(0,\cdot) =& {\rm Id}_D& &\mbox{ in } D.
	\end{align}
	We write $\Phi_t:= \Phi(t,\cdot)$.
\end{definition}
The fact that this $\Phi$ exists is a standard result in the theory of ODEs, see \cite{Har02} for example.
\begin{lemma}\label{lem:RegularityPhi}
	The map $\Phi\colon [0,T]\times D \to D$ exists, is unique and is a $C^k$-diffeomorphism.
\end{lemma}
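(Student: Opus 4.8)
The plan is to reduce the statement to classical theorems on ordinary differential equations, as the reference to \cite{Har02} already suggests, the only care being to track where the compact support and the regularity of $\w$ enter. First I would extend $\w$ by zero outside its support to obtain a field on all of $\R^d$. By Assumption \ref{ass:AssumptionsForW} the spatial supports are contained in a fixed compact set $K \subset\subset D$ (this uses continuity of $t \mapsto \w(t,\cdot)$ into $C^k_c(D)$), so the extension is again $C^1$ in $t$ and $C^k$ in $x$ with $k\geq 2$, and both $\w$ and $D\w$ are bounded on $[0,T]\times\R^d$. Hence $\w$ is continuous in $t$ and globally Lipschitz in $x$ uniformly in $t$, and the Picard--Lindel\"of (Cauchy--Lipschitz) theorem produces for each initial point a unique solution; the uniform bound $\|\w\|_\infty < \infty$ forbids finite-time blow-up, so every trajectory exists on all of $[0,T]$. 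This yields existence and uniqueness of the flow $\Phi$.

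Next I would verify that $\Phi_t$ maps $D$ onto $D$. Since $K \subset\subset D$, the field vanishes on a neighbourhood of $\partial D$, so the flow of the extended field fixes every point of $\R^d \setminus K$ and in particular every point of $\partial D$. As the flow is a homeomorphism of $\R^d$ that fixes $\partial D$ pointwise, it must carry the open set $D$ onto itself, giving $\Phi_t(D) = D$.

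For the regularity I would invoke the classical theorem on differentiable dependence on initial conditions (again \cite{Har02}): because $\w$ is $C^k$ in $x$, the map $\Phi_t$ is $C^k$ in $x$, and because $\w$ is $C^1$ in $t$ the dependence on time is $C^1$. The spatial Jacobian $J(t,x) := D_x\Phi(t,x)$ then solves the linear variational equation $\dot J = (D\w)(t,\Phi)\,J$ with $J(0,\cdot)=\Id$, and Liouville's formula gives $\det J(t,x) = \exp\!\big(\int_0^t \Div\w(s,\Phi(s,x))\,\dee s\big) = 1$ thanks to $\Div\w = 0$. In particular $D_x\Phi_t$ is invertible at every point.

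Finally, to conclude that each $\Phi_t$ is a $C^k$-diffeomorphism I would exhibit the inverse as a flow in its own right: writing $\Phi(s;t,x)$ for the value at time $s$ of the trajectory passing through $x$ at time $t$, the two-parameter flow obeys the usual composition (cocycle) identity, so $x \mapsto \Phi(0;t,x)$ is a well-defined inverse of $\Phi_t$, and it is $C^k$ by applying the same dependence-on-data theorem to the reversed field. Together with the nonvanishing Jacobian this gives the claim. I expect the only genuinely delicate bookkeeping to be the two facts that trajectories neither blow up nor escape $D$, both of which are secured precisely by the compact support in Assumption \ref{ass:AssumptionsForW}; once these are in place the regularity and invertibility are immediate consequences of the standard ODE results.
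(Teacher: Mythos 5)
Your proposal is correct and follows essentially the same route as the paper, which simply invokes the classical ODE theory in \cite{Har02} (Picard--Lindel\"of with the global Lipschitz bound coming from the compact support of $\w$, $C^k$ dependence on initial data, and invertibility via the time-reversed flow) without writing out the details you supply. The points you flag as delicate -- trajectories neither blowing up nor leaving $D$ -- are exactly the role Assumption \ref{ass:AssumptionsForW} plays in the paper as well, where the containment $\Omega(t)\subset D$ is likewise attributed to the compact support of $\w$.
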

We now use this flow map to define our moving domain.
\begin{definition}
	We define the family of moving domains $\{\Omega(t)\}_{t \in [0,T]}$ by $\Omega(t):= \Phi_t(\Omega_0)$, the image of $\Omega_0$ under $\Phi_t$ for each $t \in [0,T]$.
\end{definition}
We now have the following properties of the family $\{\Omega(t)\}_{t \in (0,T)}$.
\begin{lemma}\label{lem:propertiesOfDomainAndDet}
	It holds that $\Omega(0) = \Omega_0$.
	For each $t \in [0,T]$, $\Omega(t)\subset D$ and has $C^2$ boundary.
	Finally, $\det(D\Phi_t) = 1 $ in $\Omega_0$ for all $t\in [0,T]$.
\end{lemma}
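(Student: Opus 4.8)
The plan is to treat the four assertions separately, relying on Lemma \ref{lem:RegularityPhi} for the regularity and invertibility of $\Phi_t$; the first three are essentially immediate, and the final determinant identity is where the divergence-free hypothesis on $\w$ does the work.

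For $\Omega(0) = \Omega_0$, I would simply evaluate the defining relation at $t = 0$: since $\Phi(0,\cdot) = \Id_D$ from the initial condition of the flow ODE, we have $\Omega(0) = \Phi_0(\Omega_0) = \Omega_0$. For the inclusion $\Omega(t) \subset D$, Lemma \ref{lem:RegularityPhi} furnishes $\Phi_t$ as a $C^k$-diffeomorphism of $D$ onto itself, and the compact support of $\w$ in $D$ guarantees that trajectories cannot escape through $\partial D$; since $\Omega_0 \subset D$, it follows that $\Omega(t) = \Phi_t(\Omega_0) \subset \Phi_t(D) = D$.

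For the $C^2$-regularity of $\partial\Omega(t)$, I would use that $\Phi_t$ is in particular a homeomorphism, so $\partial\Omega(t) = \Phi_t(\partial\Omega_0)$. Composing a local $C^2$ boundary chart of $\Omega_0$ with the $C^k$-map $\Phi_t$ (and its $C^k$ inverse) produces $C^{\min(k,2)} = C^2$ boundary charts for $\Omega(t)$, using $k \geq 2$; hence $\Omega(t)$ has $C^2$ boundary. This is the standard fact that a $C^k$-diffeomorphism preserves boundary regularity up to order $k$.

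The main step is $\det(D\Phi_t) = 1$, which I would establish through the Liouville transport identity. Writing $A(t,x) := D\Phi_t(x)$ and differentiating the flow equation $\partial_t \Phi_t(x) = \w(t,\Phi_t(x))$ in the space variable yields the linear matrix ODE
\[
\partial_t A(t,x) = (D\w)(t,\Phi_t(x))\, A(t,x), \qquad A(0,x) = \Id ;
\]
the smooth dependence needed to differentiate under the flow is justified since $\w \in C^1(\overline{(0,T)};C^k_c(D;\R^d))$ with $k \geq 2$. By Jacobi's formula for the derivative of a determinant,
\[
\partial_t \det A = (\det A)\,\Tr\!\left(A^{-1}\,\partial_t A\right) = (\det A)\,\Tr\!\left(A^{-1}(D\w)(t,\Phi_t(x))A\right) = (\det A)\,(\Div\w)(t,\Phi_t(x)).
\]
Since $\Div\w = 0$ on $[0,T]\times D$ by Assumption \ref{ass:AssumptionsForW}, the right-hand side vanishes, so $t \mapsto \det A(t,x)$ is constant and equals $\det A(0,x) = \det\Id = 1$ for every $x \in \Omega_0$. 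I expect the only genuine obstacle to be the correct application of Jacobi's formula and the cyclicity of the trace to collapse $\Tr(A^{-1}(D\w)A)$ to $\Tr(D\w) = \Div\w$; the remaining assertions are routine consequences of $\Phi_t$ being a $C^k$-diffeomorphism.
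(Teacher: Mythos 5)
Your proposal is correct and follows exactly the same route as the paper's own (much terser) proof: the first three claims from $\Phi_0=\Id_D$, the compact support of $\w$, and the fact that $\Phi_t$ is a $C^2$-diffeomorphism, and the determinant identity from $\Div\w=0$ via the Liouville/Jacobi's formula argument, which the paper only states implicitly and you have written out in full. No gaps.
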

\begin{proof}
    First note that since $\Phi_t = {\rm Id}_D$, we have that $\Omega(0) = \Omega_0$
    The fact that $\Omega(t) \subset D$ follows from the $\w$ having compact support in $D$.
    Furthermore, since $\Phi_t$ is a $C^2$-diffeomorphism, $\partial \Omega(t)$ is $C^2$.
    Finally, utilising $\Div w =0$ and $\Phi_0 = {\rm Id}_D$, one has $\det(D\Phi_t) = 1$.
	For further details, see Proposition 1.4 of \cite{MajBer01}.
\end{proof}

\subsection{Evolving function spaces and compatibility}
\label{sec:EvolvingFunctionSpaces}

We begin by defining function spaces of interest.
\begin{definition}
Let
\begin{align}\label{def:spaces}
\VO :=& \{ u \in H_0^1(\Omega_0; \mathbb{R}^d): \Div u = 0\}, \\
\HO :=& \{ u \in L^2(\Omega_0; \mathbb{R}^d): u \mbox{ is weakly divergence free}\},
\end{align}
where we say $u \in H^{-1}(\Omega_0;\R^d)$ is weakly divergence free if
\begin{equation}
	\langle u, \nabla q\rangle_{H^{-1}, H^1} = 0\quad \forall q \in C_0^2(\Omega_0;\R).
\end{equation}
We will equip $\VO$ with the $H^1(\Omega_0)$ inner product, and $\HO$ with the $L^2(\Omega_0)$ inner product:
\begin{equation}
	(u,v)_{H^1}= \int_{\Omega_0} Du : Dv + u \cdot v, \qquad
	(u,v)_{L^2}= \int_{\Omega_0} u\cdot v.
\end{equation}

The time-dependent spaces are, for each $t \in [0,T]$,
\begin{align}
V(t) :=& \{ \tilde{u} \in H_0^1(\Omega(t); \mathbb{R}^d): \Div \tilde{u} = 0\}, \\
H(t) :=& \{ \tilde{u} \in L^2(\Omega(t); \mathbb{R}^d): \tilde{u} \mbox{ is weakly divergence free}\},
\end{align}
where we say for $\tilde{u} \in H^{-1}(\Omega(t);\R^d)$, is weakly divergence free if
\begin{equation}\label{eq:WeaklyDivergenceFree}
	\langle \tilde{u}, \nabla \tilde{q}\rangle_{H^{-1},H^1} = 0 \quad \forall \tilde{q} \in C_0^2(\Omega(t);\R).
\end{equation}
We will equip $V(t)$ with the $H^1(\Omega(t))$ inner product, and $H(t)$ with the $L^2(\Omega(t))$ inner product:
\begin{equation}
	(\tilde{u},\tilde{v})_{H^1}:= \int_{\Omega(t)} D\tilde{u} : D\tilde{v} + \tilde{u} \cdot \tilde{v}, \qquad
	(\tilde{u},\tilde{v})_{L^2}:= \int_{\Omega(t)} \tilde{u}\cdot \tilde{v}.
\end{equation}
\end{definition}
Throughout, on inner-product spaces, we will use the induced norms $\|u\|_X = \sqrt{(u,u)_X}$.
In the above, we have dropped the domain and codomain for the spaces which appear in the subscripts to avoid cumbersome notation, this will be done whenever it is clear as to which is the appropriate domain.

It is clear that these spaces are Hilbert with their inner products.
We also have that
\begin{equation}
V(t) \subset H(t) \cong H(t)^* \subset V^*(t)
\end{equation}
is a Gelfand triple for every $t \in [0,T]$.
The evolving family of Hilbert spaces that we will consider are
$\{V(t)\}_{t\in(0,T)}$ and $\{ H(t) \}_{t\in(0,T)}$.
In this setting, we make use of the characterisation of the dual space \cite[Page 8]{Tem87}
\begin{equation}
	V^*(t) = \{ \tilde{u}\in H^{-1}(\Omega(t);\R^d) : \tilde{u} \mbox{ is weakly divergence free} \}.
\end{equation}
We will repeatedly use the fact that $V(t) \subset H^1_0(\Omega(t);\R^d)$, $H(t) \subset L^2(\Omega(t);\R^d)$, and $V^*(t) \subset H^{-1}(\Omega(t);\R^d)$.

We now define a family of maps which will transform functions on $\Omega_0$ to functions on $\Omega(t)$.
\begin{definition}[Piola transform]\label{eq:contravariantPiola}
	For each $t \in [0,T]$, the linear map $\phi_t\colon L^2(\Omega_0;\R^d) \to L^2(\Omega(t);\R^d)$ is defined by
	\begin{equation}
		\phi_t u := \left( D\Phi_t u \right) \circ \Phi_t^{-1}\label{eq:contravariantPiola_1}
	\end{equation}
	for $u \in L^2(\Omega_0;\R^d)$.
	Similarly define the family of linear maps $(\phi_{-t})_{t \in [0,T]} \colon L^2(\Omega(t);\R^d) \to L^2(\Omega_0;\R^d)$ by
	\begin{equation}
		\phi_{-t} \tilde{u} := D\Phi_t^{-1} \left(\tilde{u}\circ \Phi_t\right)\label{eq:contravariantPiola_2}
	\end{equation}
	for $\tilde{u} \in L^2(\Omega(t);\R^d)$, for each $t \in [0,T]$.
\end{definition}
Notice that this transformation is different to the plain pull-back considered in \cite{AlpEllSti15A,AlpEllSti15B}, which is induced by the given diffeomorphism of the domains.
The reason for using this map is that we want to ensure that the transformation takes divergence free functions to divergence free functions.
This transformation is known as the (contravariant) Piola transform in the particular case that $\det(D\Phi_t)$ is spacially constant, for more details see \cite{RogKirLog10}, for example.
With this definition in mind and the appropriate regularity of $\Phi_t$, courtesy of Lemma \ref{lem:RegularityPhi}, we give the result that $\phi_t$ and $\phi_{-t}$ preserve the divergence of the functions they are applied to.

\begin{lemma}\label{lem:divPreserving}
	For each $t \in (0,T)$, let $u \in C^1(\Omega_0;\R^d)$, it holds that
	\begin{equation}
		\Div(\phi_t u) = \left(\Div u\right) \circ \Phi_t^{-1},
	\end{equation}
	in addition, for $\tilde{u}\in C^1(\Omega(t):\R^d)$ it also holds that
	\begin{equation}
		\Div (\phi_{-t}\tilde{u}) = \left(\Div \tilde{u}\right) \circ \Phi_t.
	\end{equation}
\end{lemma}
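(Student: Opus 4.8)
The plan is to verify the first identity by a direct computation in coordinates, and then to deduce the second from it by observing that $\phi_{-t}$ is itself a Piola transform of the inverse flow. Fix $t \in (0,T)$ and write $F := D\Phi_t$, a $C^{k-1}$ matrix field on $\Omega_0$; by Lemma \ref{lem:propertiesOfDomainAndDet} we have $\det F \equiv 1$ on $\Omega_0$. Writing $y = \Phi_t(X)$ with $X = \Phi_t^{-1}(y)$ and using the component expression $(\phi_t u)_i(y) = \sum_j F_{ij}(X)\, u_j(X)$, the chain rule together with $\partial X_k/\partial y_i = (F^{-1})_{ki}$ gives
\begin{equation*}
	\Div(\phi_t u)(y) = \sum_{i,k} (F^{-1})_{ki}\, \frac{\partial}{\partial X_k}\Bigl(\sum_j F_{ij}\, u_j\Bigr).
\end{equation*}
Expanding the inner derivative by the product rule splits this into two contributions: a term in which the derivative falls on $u$, and a term in which it falls on $F$.

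For the first contribution I would use the identity $\sum_i (F^{-1})_{ki} F_{ij} = \delta_{kj}$ to collapse it directly to $\sum_j \partial u_j/\partial X_j = \Div u$, which is exactly the desired right-hand side once one recalls that everything is evaluated at $X = \Phi_t^{-1}(y)$. The main point, and the step I expect to be the crux, is to show that the second contribution,
\begin{equation*}
	\sum_j u_j \sum_{i,k} (F^{-1})_{ki}\, \frac{\partial F_{ij}}{\partial X_k},
\end{equation*}
vanishes. Here I would use that $F_{ij} = \partial \Phi_i/\partial X_j$, so that $\partial F_{ij}/\partial X_k = \partial^2 \Phi_i/\partial X_k \partial X_j$ is symmetric in the indices $(j,k)$ (this is where $\Phi_t \in C^2$, guaranteed by $k \geq 2$, enters), and then recognise the inner sum as $\operatorname{tr}\bigl(F^{-1}\, \partial F/\partial X_j\bigr)$. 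By Jacobi's formula this equals $(\det F)^{-1}\, \partial(\det F)/\partial X_j$, which is zero because $\det F \equiv 1$ is constant. This is precisely the Piola identity, and it is the place where the divergence-preserving property genuinely relies on the volume preservation $\det(D\Phi_t) = 1$ established in Lemma \ref{lem:propertiesOfDomainAndDet}.

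Combining the two contributions yields $\Div(\phi_t u) = (\Div u)\circ \Phi_t^{-1}$. For the second identity I would avoid repeating the computation by noting that $\phi_{-t}$ is exactly the Piola transform associated with the diffeomorphism $\Phi_t^{-1}\colon \Omega(t) \to \Omega_0$: indeed $(\phi_{-t}\tilde u)(X) = (D\Phi_t(X))^{-1}\, \tilde u(\Phi_t(X))$ coincides with $\bigl(D(\Phi_t^{-1})\, \tilde u\bigr)\circ(\Phi_t^{-1})^{-1}$, and the Jacobian of $\Phi_t^{-1}$ also has determinant $1$. Applying the first identity with $\Phi_t$ replaced by $\Phi_t^{-1}$ then gives $\Div(\phi_{-t}\tilde u) = (\Div \tilde u)\circ \Phi_t$, completing the proof.
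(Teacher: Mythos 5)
Your proof of the first identity follows essentially the same route as the paper: expand $\Div(\phi_t u)$ in coordinates via the chain and product rules, collapse the term where the derivative falls on $u$ using $F^{-1}F = \Id$, and kill the remaining term $\sum_{i,k}(F^{-1})_{ki}\,\partial_k F_{ij}$ by recognising it (after exchanging $\partial_k$ and $\partial_j$ on $\Phi_t$, which is where $k\geq 2$ enters) as $\operatorname{tr}\bigl(F^{-1}\partial_j F\bigr) = (\det F)^{-1}\partial_j \det F = 0$, since $\det D\Phi_t \equiv 1$ by Lemma \ref{lem:propertiesOfDomainAndDet}; this is exactly the paper's use of Jacobi's formula, stated there in the same index form. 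Where you genuinely diverge is in the second identity. The paper runs a second, independent coordinate computation for $\phi_{-t}\tilde{u}$, which requires the additional identity $\partial_i(D\Phi_t^{-1}) = -(D\Phi_t)^{-1}\,\partial_i D\Phi_t\,(D\Phi_t)^{-1}$ before Jacobi's formula can be invoked again. You instead observe that $\phi_{-t}$ is precisely the Piola transform associated with the inverse diffeomorphism $\Phi_t^{-1}\colon\Omega(t)\to\Omega_0$, whose Jacobian determinant is also identically $1$ (and which is $C^2$ by Lemma \ref{lem:RegularityPhi}), so the second identity is the first one applied to $\Phi_t^{-1}$. This is correct and somewhat more economical: it avoids the derivative-of-an-inverse manipulation entirely and makes transparent that the two statements are a single statement applied to a flow and its inverse, at the mild cost of phrasing the first computation so that it visibly only uses that the Jacobian determinant is constant, not anything specific to $\Phi_t$.
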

This result may be found in e.g.\ \cite{RogKirLog10} and makes use of Jacobi's formula for the derivative of a determinant.
For completeness, we provide the proof in Appendix \ref{appendix:ProofOfLemma}.

We now state that $\phi_t$ and $\phi_{-t}$ are bounded in appropriate norms, the proof of which is also in Appendix \ref{appendix:ProofOfLemma}.
\begin{lemma}\label{lem:phiBoundedWithInverse}
	For any $t \in (0,T)$, there are constants $C_1, C_2, C_3, C_4 >0$ independent of $t$ such that
	\begin{align}
		\|\phi_t u \|_{H^1(\Omega(t))} \leq& C_1 \|u \|_{H^1(\Omega_0)} \quad \forall u \in H^1(\Omega_0;\R^d),
		\\
		\|\phi_t u \|_{L^2(\Omega(t))} \leq& C_2\|u \|_{L^2(\Omega_0)} \quad \forall u \in L^2(\Omega_0;\R^d),
		\\
		\|\phi_{-t} \tilde{u} \|_{H^1(\Omega_0)} \leq& C_3 \|\tilde{u} \|_{H^1(\Omega(t))} \quad \forall \tilde{u} \in H^1(\Omega(t);\R^d),
		\\
		\|\phi_{-t} \tilde{u} \|_{L^2(\Omega_0)} \leq& C_4 \|\tilde{u} \|_{L^2(\Omega(t))} \quad \forall \tilde{u} \in L^2(\Omega(t);\R^d).
	\end{align}
\end{lemma}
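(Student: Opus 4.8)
The plan is to establish all four bounds by changing variables through the flow map $\Phi_t$, exploiting the crucial fact from Lemma \ref{lem:propertiesOfDomainAndDet} that $\det(D\Phi_t) = 1$, which removes any Jacobian factor from the transformed integrals. First I would secure uniform-in-$t$ control of the flow. Since $\w \in C^1(\overline{(0,T)}; C^k_c(D;\R^d))$ with $k \geq 2$ by Assumption \ref{ass:AssumptionsForW}, the regularity of Lemma \ref{lem:RegularityPhi} gives that $\Phi$ and $\Phi^{-1}$ are $C^k$ in space and jointly continuous in $(t,x)$; by compactness of $[0,T] \times \overline{\Omega_0}$ together with the compact support of $\w$, the quantities $\sup_t \|D\Phi_t\|_{C^1}$ and $\sup_t \|(D\Phi_t)^{-1}\|_{C^1}$ are finite. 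These bounds are precisely what makes the constants $C_1,\dots,C_4$ independent of $t$.

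For the two $H$-estimates I would substitute directly. Writing $\|\phi_t u\|_{H(t)}^2 = \int_{\Omega(t)} |(D\Phi_t u)\circ \Phi_t^{-1}|^2$ and applying the change of variables $x = \Phi_t(y)$ with unit Jacobian gives $\int_{\Omega_0} |D\Phi_t(y)\, u(y)|^2 \,\dee y \leq \bigl(\sup_t\|D\Phi_t\|\bigr)^2 \|u\|_{H_0}^2$, which yields $C_2$; the estimate for $\phi_{-t}$ is entirely symmetric and produces $C_4$.

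For the $V$-estimates the additional ingredient is controlling the gradient. I would apply the product and chain rules to $\phi_t u(x) = D\Phi_t(\Phi_t^{-1}(x))\, u(\Phi_t^{-1}(x))$; differentiating produces two types of terms, one proportional to $D^2\Phi_t \cdot u \cdot (D\Phi_t)^{-1}$ and one to $D\Phi_t \cdot Du \cdot (D\Phi_t)^{-1}$, so that pointwise $|D(\phi_t u)| \leq C\,(|u| + |Du|)\circ \Phi_t^{-1}$ with $C$ depending only on the uniform flow bounds from the first step. Squaring, integrating over $\Omega(t)$, and again changing variables with unit Jacobian bounds $\int_{\Omega(t)} |D(\phi_t u)|^2$ by a constant multiple of $\|u\|_{V_0}^2$; combined with the $H$-estimate this delivers $C_1$. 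The inverse bound $C_3$ follows in the same way, with the roles of $\Phi_t$ and $(D\Phi_t)^{-1}$ interchanged.

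The main obstacle is the uniform control established in the first step: one must confirm that all spatial derivatives of $\Phi_t$ and $\Phi_t^{-1}$ up to second order are bounded uniformly in $t$. This is exactly where the hypothesis $k \geq 2$ is needed — the $V$-estimate genuinely involves $D^2\Phi_t$ — and it explains why $\w$ is required to be $C^k_c$ rather than merely $C^1$ in space. Once these uniform bounds are in place, the remaining computations are routine applications of the change of variables formula and the product and chain rules.
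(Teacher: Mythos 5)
Your proposal is correct and follows essentially the same route as the paper's proof in Appendix \ref{appendix:ProofOfLemma}: change of variables through $\Phi_t$ with unit Jacobian from Lemma \ref{lem:propertiesOfDomainAndDet} for the $H$-estimates, and the product/chain rule giving the pointwise bound $|D(\phi_t u)| \leq C\,(|u| + |Du|)\circ\Phi_t^{-1}$ in terms of $\|D\Phi_t\|_{0,\infty}$, $\|D^2\Phi_t\|_{0,\infty}$ and $\|D(\Phi_t^{-1})\|_{0,\infty}$ for the $V$-estimates, with the inverse bounds obtained symmetrically. Your explicit justification of the uniform-in-$t$ control of these sup-norms (via compactness and Assumption \ref{ass:AssumptionsForW}) is a point the paper leaves implicit, but it is a clarification rather than a different argument.
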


We now define the dual operators to $\phi_t$ and $\phi_{-t}$, this will allow us to appropriately define the extension of $\phi_t$ to $H^{-1}(\Omega_0;\R^d)$, similarly for $\phi_{-t}$ to $H^{-1}(\Omega(t);\R^d)$.

\begin{definition}\label{def:covariantPiola_1}
	For each $t \in [0,T]$, we define the linear maps
	$\phi_t^* \colon L^2(\Omega(t);\R^d) \to L^2(\Omega_0;\R^d)$ and
	$\phi_{-t}^* \colon L^2(\Omega_0;\R^d) \to L^2(\Omega(t);\R^d)$ by
	\begin{align}
	\phi_t^* \tilde{u}:=& D\Phi_t^{T} \left(\tilde{u}\circ \Phi_t\right),\label{eq:covariantPiola_1}
	\\
	\phi_{-t}^* u :=& \left( D\Phi_t^{-T} u \right) \circ \Phi_t^{-1},\label{eq:covariantPiola_2}
	\end{align}
	for $\tilde{u} \in L^2(\Omega(t);\R^d)$ and $u \in L^2(\Omega_0;\R^d)$.
\end{definition}
These maps are known as the covariant Piola transforms \cite{RogKirLog10}.
In future appearances of composition with maps, we will relax the number of brackets to avoid an excessive number in a single expression e.g.~for \eqref{eq:contravariantPiola_2}, we will write $D\Phi_t^{-1} \tilde{u} \circ \Phi_t$
It is always the case that only the final term should be composed with the map (with brackets).

\begin{remark}
\hspace{1mm}
\begin{itemize}
\item
By a minor modification to the proofs of Lemma \ref{lem:phiBoundedWithInverse}, it follows that the maps $\phi_t^*$ and $\phi_{-t}^*$ are bounded.
\item
It is also possible to see that $\phi_t^*$ is dual to $\phi_t$ in the following sense:
	for $u \in L^2(\Omega_0)$, $\tilde{\eta} \in L^2(\Omega(t))$, we calculate, utilising that $\det(D\Phi_t) = 1$,
	\begin{equation}\begin{split}\label{eq:dualityCalculationPt1}
		(\phi_t u, \tilde{\eta})_{L^2}
		=
		\int_{\Omega(t)} \phi_t u \cdot \tilde{\eta}
		&=
		\int_{\Omega(t)} \left( D\Phi_t u\right) \circ \Phi_t^{-1} \cdot \tilde{\eta}
		\\
		&=
		\int_{\Omega_0} u \cdot  D\Phi_t^T \tilde{\eta}\circ \Phi_t
		=
		\int_{\Omega_0} u \cdot \phi_t^* \tilde{\eta}
		=
		(u,\phi_t^* \tilde{\eta})_{L^2}.
	\end{split}\end{equation}
	We may do a similar calculation for $\phi_{-t}^*$ and $\phi_{-t}$ and $\tilde{u} \in L^2(\Omega(t))$ and $\eta \in L^2(\Omega_0)$,
	\begin{equation}\begin{split}\label{eq:dualityCalculationPt2}
		(\phi_{-t} \tilde{u},\eta)_{L^2}
		=
		\int_{\Omega_0} \phi_{-t} \tilde{u} \cdot \eta
		&=
		\int_{\Omega_0} D\Phi_t^{-1} \tilde{u} \circ \Phi_t \cdot \eta
		\\
		&=
		\int_{\Omega(t)} \tilde{u} \cdot \left( D\Phi_t^{-T} \eta \right) \circ \Phi_t^{-1}
		=
		\int_{\Omega(t)} \tilde{u} \cdot \phi_{-t}^* \eta
		=
		(\tilde{u},\phi_{-t}^*\eta)_{L^2}.
	\end{split}\end{equation}
\end{itemize}
\end{remark}

An immediate consequence of this remark is that the maps $\phi_t$ and $\phi_{-t}$ may be extended to distributions via duality.
\begin{definition}\label{def:DualSpaceMapDefintion}
	For each $t \in [0,T]$, we define the map $\phi_t \colon H^{-1}(\Omega_0;\R^d) \to H^{-1}(\Omega(t);\R^d)$ by: for $u \in H^{-1}(\Omega_0;\R^d)$, let $\phi_t u$ to be the unique element $g \in H^{-1}(\Omega(t);\R^d)$ such that
	\begin{equation}
		\langle g, v\rangle_{H^{-1}, H^1}
		=
		\langle u, \phi_t^* v\rangle_{H^{-1}, H^1}\quad \forall v \in H^1_0(\Omega(t);\R^d).
	\end{equation}
	For each $t \in [0,T]$, We define the map $\phi_{-t} \colon H^{-1}(\Omega(t);\R^d) \to H^{-1}(\Omega_0;\R^d)$ by: for $\tilde{u} \in H^{-1}(\Omega(t);\R^d)$, we define $\phi_{-t} \tilde{u}$ to be the unique element $\tilde{g} \in H^{-1}(\Omega_0;\R^d)$ such that
	\begin{equation}
		\langle \tilde{g}, \tilde{v}\rangle_{H^{-1}, H^1}
		=
		\langle \tilde{u}, \phi_{-t}^* \tilde{v}\rangle_{H^{-1}, H^1} \quad \forall \tilde{v}\in H^1_0(\Omega_0;\R^d).
	\end{equation}
\end{definition}
Notice that, by the calculations in \eqref{eq:dualityCalculationPt1} and \eqref{eq:dualityCalculationPt2}, this is seen to be an extension of $\phi_t$ and $\phi_{-t}$ as defined on $L^2(\Omega_0)$ and $L^2(\Omega(t))$ respectively.
The unique elements $g$ and $\tilde{g}$ exist by applications of the Riesz representation theorem.
We comment that this definition of the maps between elements of $H^{-1}(\Omega(t);\R^d)$ and $H^{-1}(\Omega_0;\R^d)$ are in the same spirit of \cite{AlpEllSti15A}.
However, the maps are considered differently due to the way we wish to characterise our spaces; in particular the fact that we wish for our $V^*$ spaces to be divergence free $H^{-1}$ spaces, rather than the abstract dual of the $V$ spaces, which will be larger than $H^{-1}$.
\begin{lemma}\label{lem:ExtendedMapVStarToVStarT}
    For $u \in \VO^*$, it holds that $\phi_t u \in V^*(t)$ and for $\tilde{u} \in V^*(t)$ it holds that $\phi_{-t}\tilde{u} \in \VO^*$.
\end{lemma}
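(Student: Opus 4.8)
The plan is to characterise membership in $V(t)^*$ through a weak divergence condition and then transport that condition back to $\Omega_0$ via the defining duality relation for $\phi_t u$. Recall that the paper identifies $V(t)^*$ with the weakly divergence free elements of $H^{-1}(\Omega(t);\R^d)$, and analogously $V_0^* = \{u \in H^{-1}(\Omega_0;\R^d): \Div u = 0 \text{ weakly}\}$. For $g \in H^{-1}(\Omega(t);\R^d)$ the natural $H^{-1}$-analogue of the $L^2$ condition \eqref{eq:WeaklyDivergenceFree} is that $\langle g, \nabla\tilde q\rangle_{H^{-1}(\Omega(t);\R^d),H^1_0(\Omega(t);\R^d)} = 0$ for every $\tilde q \in C_c^\infty(\Omega(t);\R)$, since then $\nabla\tilde q \in C_c^\infty(\Omega(t);\R^d)\subset H^1_0(\Omega(t);\R^d)$ is an admissible test field. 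So the first step is to fix such a $\tilde q$ and, using Definition \ref{def:DualSpaceMapDefintion} with the test field $v = \nabla\tilde q$, to write $\langle \phi_t u, \nabla\tilde q\rangle = \langle u, \phi_t^*\nabla\tilde q\rangle_{H^{-1}(\Omega_0;\R^d),H^1_0(\Omega_0;\R^d)}$.

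The key step, and the crux of the argument, is the observation that the covariant Piola transform maps gradients to gradients. Indeed, by the chain rule, for $\tilde q \in C_c^\infty(\Omega(t);\R)$ one has $\nabla(\tilde q \circ \Phi_t) = D\Phi_t^{T}\,(\nabla \tilde q)\circ\Phi_t = \phi_t^*(\nabla \tilde q)$, which I would record as a short auxiliary identity. Since $\Phi_t$ is a $C^k$-diffeomorphism with $k\geq 2$ (Lemma \ref{lem:RegularityPhi}) carrying $\Omega_0$ onto $\Omega(t)$ and $\tilde q$ has compact support in $\Omega(t)$, the function $q := \tilde q\circ\Phi_t$ lies in $C_c^k(\Omega_0;\R)\subset H^2_0(\Omega_0;\R)$, so $\nabla q \in H^1_0(\Omega_0;\R^d)$ is again an admissible test field; this is exactly what makes $\phi_t^*\nabla\tilde q$ a genuine gradient on the reference domain. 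The analogous identity $\nabla(q\circ\Phi_t^{-1}) = \phi_{-t}^*(\nabla q)$ holds by the same chain-rule computation applied to $\Phi_t^{-1}$, and will be used for the second assertion.

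Combining the two steps gives $\langle \phi_t u, \nabla\tilde q\rangle = \langle u, \nabla q\rangle$, which vanishes because $u \in V_0^*$ is weakly divergence free; a routine density argument, using that $C_c^\infty(\Omega_0;\R)$ is dense in $H^2_0(\Omega_0;\R)$ so that the defining condition for $V_0^*$ extends from smooth to $C^k_c$ test functions, closes the small gap between the test-function classes. As $\tilde q$ was arbitrary, $\phi_t u$ is weakly divergence free, i.e.\ $\phi_t u \in V(t)^*$. The statement for $\phi_{-t}$ follows verbatim with the roles of $\Omega_0$ and $\Omega(t)$ interchanged: for $\tilde u \in V(t)^*$ and $q \in C_c^\infty(\Omega_0;\R)$ one computes $\langle \phi_{-t}\tilde u, \nabla q\rangle = \langle \tilde u, \phi_{-t}^*\nabla q\rangle = \langle \tilde u, \nabla(q\circ\Phi_t^{-1})\rangle = 0$. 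The main obstacle is purely the bookkeeping around the weak divergence notion in $H^{-1}$: one must ensure that the fields produced by $\phi_t^*$ and $\phi_{-t}^*$ are admissible test fields and that the divergence-free condition for $u$, stated on a fixed class of test functions, can legitimately be applied to them. Once the gradient-to-gradient identity for the covariant Piola transform is in place, the rest is immediate.
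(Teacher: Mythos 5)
Your proposal is correct and follows essentially the same route as the paper: apply the duality definition of the extended map to a gradient test field and use the chain-rule identity that the covariant Piola transforms $\phi_t^*$, $\phi_{-t}^*$ carry gradients of compactly supported scalars to gradients of the composed scalars, so the weak divergence-free condition transports across the diffeomorphism. The only cosmetic difference is your use of $C_c^\infty$ test functions plus a density remark, which the paper sidesteps by testing directly with $C_0^1$ functions (a class preserved under composition with the $C^k$ diffeomorphism).
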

\begin{proof}
    The result follows by seeing that when $\eta \in C_0^2(\Omega_0)$, $\nabla (\eta \circ \Phi_t^{-1}) = \phi_{-t}^* \nabla \eta$, therefore
    \begin{equation}
        \langle \phi_{-t}\tilde{u},\nabla \eta\rangle_{H^{-1}, H^1}
        =
        \langle \tilde{u}, \phi_{-t}^* \nabla \eta\rangle_{H^{-1}, H^1}
        =
        \langle \tilde{u}, \nabla (\eta \circ \Phi^{-1}) \rangle_{H^{-1}, H^1} = 0,
    \end{equation}
    since $\tilde{u}$ is weakly divergence free and $\eta \circ \Phi_t^{-1} \in C_0^2(\Omega(t))$.
    The converse direction follows through the same idea.
\end{proof}

\begin{lemma}\label{lem:BoundednessForMapOnDual}
	For any $t\in [0,T]$, there are $C_1,\,C_2>0$ independent of $t$ such that
	\begin{align}
	\|\phi_t u \|_{H^{-1}(\Omega(t)) } \leq& C_1 \|u \|_{H^{-1}(\Omega_0)} \quad \forall u \in H^{-1}(\Omega_0;\R^d),
	\\
	\|\phi_{-t} \tilde{u} \|_{H^{-1}(\Omega_0)} \leq& C_2 \|\tilde{u} \|_{H^{-1}(\Omega(t))} \quad \forall \tilde{u} \in H^{-1}(\Omega(t);\R^d).
	\end{align}
\end{lemma}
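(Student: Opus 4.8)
The plan is to prove both inequalities by the same duality argument, so I focus on the first, $\|\phi_t u\|_{V(t)^*} \le C_1\|u\|_{V_0^*}$; the bound for $\phi_{-t}$ is obtained by interchanging the roles of $\Omega_0$ and $\Omega(t)$ and replacing $\phi_t^*$ by $\phi_{-t}^*$. First I would write the dual norm as a supremum over test functions,
\begin{equation*}
\|\phi_t u\|_{V(t)^*} = \sup_{0 \ne v \in V(t)} \frac{\langle \phi_t u, v\rangle}{\|v\|_{V(t)}},
\end{equation*}
and then invoke Definition \ref{def:DualSpaceMapDefintion} to transfer the pairing from $\Omega(t)$ to $\Omega_0$, namely $\langle \phi_t u, v\rangle = \langle u, \phi_t^* v\rangle$. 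This reduces the claim to an estimate of $\langle u, \phi_t^* v\rangle$ in terms of $\|u\|_{V_0^*}$ and $\|v\|_{V(t)}$.

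The key analytic ingredient is the boundedness of the covariant Piola transform $\phi_t^*$. As observed in the remark following its definition, a minor modification of the proof of Lemma \ref{lem:phiBoundedWithInverse} yields a constant $C>0$, independent of $t$, such that $\|\phi_t^* v\|_{H^1_0(\Omega_0;\R^d)} \le C\|v\|_{H^1_0(\Omega(t);\R^d)}$ for all $v$. The uniformity in $t$ is inherited from the uniform bounds on $D\Phi_t$ and $D\Phi_t^{-1}$ over the compact interval $[0,T]$, together with $\det(D\Phi_t)=1$ from Lemma \ref{lem:propertiesOfDomainAndDet}. Since $\|v\|_{V(t)}$ is exactly the full $H^1(\Omega(t);\R^d)$ norm on the subspace $V(t)$, this controls $\|\phi_t^* v\|$ by $\|v\|_{V(t)}$.

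The step that requires care, and which I expect to be the main obstacle, is that $\phi_t^* v$ lies in $H^1_0(\Omega_0;\R^d)$ but need not be divergence free, whereas $u \in V_0^*$ is naturally paired with elements of $V_0$; indeed, unlike the contravariant maps $\phi_t, \phi_{-t}$ of Lemma \ref{lem:divPreserving}, the covariant transform $\phi_t^*$ does not preserve the solenoidal constraint. The resolution is to exploit that $u$ is weakly divergence free: as an element of $H^{-1}(\Omega_0;\R^d)$ it annihilates gradient fields, so only the solenoidal part of $\phi_t^* v$ contributes to $\langle u, \phi_t^* v\rangle$. Concretely I would either (i) read the $V_0^*$-norm as the $H^{-1}(\Omega_0;\R^d)$-norm of the chosen solenoidal representative, in which case $\langle u, \phi_t^* v\rangle \le \|u\|_{V_0^*}\,\|\phi_t^* v\|_{H^1_0(\Omega_0;\R^d)}$ and the claim follows at once from the previous paragraph; or (ii) split $\phi_t^* v = \psi_\sigma + \nabla q$ by a Helmholtz/de Rham decomposition, discard $\nabla q$ using $\langle u, \nabla q\rangle = 0$, and bound $\|\psi_\sigma\|_{V_0}$ by $\|\phi_t^* v\|_{H^1_0(\Omega_0;\R^d)}$ via the bounded right inverse of the divergence on the $C^2$ domain $\Omega_0$. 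Checking that the dual norm on $V_0^*$ is equivalent to the $H^{-1}$ norm of the solenoidal representative (so that routes (i) and (ii) coincide) is the only genuinely non-routine point; once it is in place, the uniform constant $C_1$ comes directly from the $t$-independent bound on $\phi_t^*$, and the estimate for $\phi_{-t}$ follows by the symmetric argument using $\phi_{-t}^*$.
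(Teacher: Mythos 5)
Your core argument --- writing the dual norm as a supremum, transferring the pairing via Definition \ref{def:DualSpaceMapDefintion} so that $\langle \phi_t u, v\rangle = \langle u,\phi_t^* v\rangle$, and invoking the $t$-uniform boundedness of the covariant transforms $\phi_t^*$, $\phi_{-t}^*$ (the minor modification of Lemma \ref{lem:phiBoundedWithInverse}) --- is exactly the paper's proof of Lemma \ref{lem:BoundednessForMapOnDual}, which is stated there in one line. So the main route is sound and matches.

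The difficulty is the point you single out as the ``only genuinely non-routine'' one: the claimed equivalence, on weakly divergence-free elements of $H^{-1}(\Omega_0;\R^d)$, between the $H^{-1}$ norm and the dual norm taken over solenoidal test functions in $V_0$. This equivalence is false. Take $u \equiv e_1$, a constant vector field on $\Omega_0$: it is weakly divergence free, since $\int_{\Omega_0} e_1\cdot\nabla q = \int_{\Omega_0}\partial_1 q = 0$ for every compactly supported scalar $q$, so $u\in V_0^*$ and $\|u\|_{H^{-1}(\Omega_0;\R^d)}>0$; yet for every $v\in V_0$ one has $\int_{\Omega_0} e_1\cdot v = \int_{\Omega_0}\nabla x_1\cdot v = -\int_{\Omega_0} x_1\,\Div v = 0$, so the supremum over divergence-free test functions vanishes. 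More generally, gradients of harmonic $L^2$ functions are weakly divergence free but annihilate all of $V_0$, so the quantity in your route (ii) is only a seminorm and routes (i) and (ii) do not coincide. Fortunately no such reconciliation is needed: in the paper $V_0^*$ and $V(t)^*$ are identified with subspaces of $H^{-1}$, and the relevant norms are the $H^{-1}$-type dual norms in which the supremum runs over all of $H^1_0$, consistently with Definition \ref{def:DualSpaceMapDefintion}, where $\phi_t u$ is paired against arbitrary $v\in H^1_0(\Omega(t);\R^d)$. With that reading your route (i) already constitutes the complete proof --- no Helmholtz/Bogovskii decomposition and no solenoidality of $\phi_t^* v$ is required --- and the constants are uniform in $t$ exactly as you argue; you should simply delete the appeal to the (false) norm equivalence rather than defer to it.
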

\begin{proof}
This is a consequence of using the dual norm, the boundedness of $\phi_t^*$ and $\phi_{-t}^*$ and that the transformation is given by duality.
\end{proof}

We now define what it means for our moving spaces to be \emph{compatible} in the sense of \cite{AlpEllSti15A}.
\begin{definition}
A pair $(X, (\phi_t)_t)$ is compatible if and only if the following holds:
\begin{itemize}
\item for every $t \in [0,T]$, $X(t)$ is a real separable Hilbert space and the map $\phi_t:X_0 \to X(t)$ is a linear homeomorphism such that $\phi_0$ is the identity.

\item there exists a constant $C_X$ independent of $t$ such that
\begin{align}
\| \phi_t u \|_{X(t)} \leq& C_X \| u \|_{X_0}& &\forall u \in X_0 \\
\| \phi_{-t} u \|_{X_0} \leq& C_X \| u \|_{X(t)}& &\forall u \in X(t)
\end{align}
where $\phi_{-t}\colon X(t) \to X_0$ is the inverse of $\phi_t$.
\item the map $t \mapsto \| \phi_t u \|_{X(t)} $is continuous for all $u \in X_0$.
\end{itemize}
\end{definition}

\begin{proposition}\label{prop:wholeSpaceCompatible}
	The pairs $(H^1, \phi)$, $(L^2,\phi)$, and $(H^{-1},\phi)$ are compatible pairs.
\end{proposition}
The proof of this follows as in the following result.
\begin{proposition}\label{prop:compatibility}
The pairs $(V^*,\phi)$, $(H, \phi)$ and $(V, \phi)$ are compatible.
\end{proposition}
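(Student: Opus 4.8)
The plan is to verify, for each of the three pairs, the three requirements in the definition of compatibility: the separable Hilbert structure together with $\phi_0 = \Id$, the uniform two-sided bound with $\phi_{-t}=\phi_t^{-1}$, and the continuity of $t\mapsto\|\phi_t u\|_{X(t)}$. The first two requirements are essentially bookkeeping built on the lemmas already proved, so I would dispatch them quickly; the genuine work lies in the continuity, and within that the case $X=V^*$ is the main obstacle.

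For the static part, separability of $V_0,H_0,V(t),H(t)$ is inherited from that of $L^2$ and $H^1_0$ (closed subspaces of separable spaces are separable), while $V_0^*,V(t)^*$ are separable as duals of separable Hilbert spaces. Since $\Phi_0=\Id_D$ gives $D\Phi_0=\Id$, we get $\phi_0=\Id$ on each space. The pointwise identity $D\Phi_t^{-1}D\Phi_t=\Id$ shows $\phi_{-t}\phi_t=\Id$ and $\phi_t\phi_{-t}=\Id$, so $\phi_{-t}$ is the inverse of $\phi_t$; that $\phi_t$ actually maps $V_0\to V(t)$ and $H_0\to H(t)$ follows from Lemma \ref{lem:divPreserving} (extended from $C^1$ functions to $H_0$ by density together with the bound in Lemma \ref{lem:phiBoundedWithInverse}), and $\phi_t\colon V_0^*\to V(t)^*$ from Lemma \ref{lem:ExtendedMapVStarToVStarT}. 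The uniform bounds making these maps linear homeomorphisms are exactly Lemmas \ref{lem:phiBoundedWithInverse} and \ref{lem:BoundednessForMapOnDual}.

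For the continuity in the cases $X=H$ and $X=V$, I would change variables $x=\Phi_t(y)$ and use $\det D\Phi_t\equiv 1$ (Lemma \ref{lem:propertiesOfDomainAndDet}) to rewrite the norm as an integral over the fixed domain $\Omega_0$. For $H$ one obtains $\|\phi_t u\|_{H(t)}^2=\int_{\Omega_0}|D\Phi_t\,u|^2$, and for $V$ a similar integral whose integrand is a polynomial expression in $D\Phi_t$, $(D\Phi_t)^{-1}$, $D^2\Phi_t$, $u$ and $Du$. By Assumption \ref{ass:AssumptionsForW} and Lemma \ref{lem:RegularityPhi}, $\Phi\in C^1([0,T];C^k(\overline{D}))$ with $k\ge 2$, so all of these $\Phi$-coefficients are continuous in $t$ uniformly in space (matrix inversion being continuous since $\det D\Phi_t=1$ is nonzero). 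Dominated convergence, with the integrable majorant $C(|u|^2+|Du|^2)$, then gives continuity of $t\mapsto\|\phi_t u\|_{X(t)}^2$ and hence of the norm. The hypothesis $k\ge 2$ is used precisely to keep $D^2\Phi_t$ continuous in the $V$ case.

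The main obstacle is $X=V^*$, where the norm is a dual norm and does not transform as an integral. Here I would reformulate via Riesz representation: for fixed $t$ let $\psi_t\in V(t)$ solve $(\psi_t,v)_{V(t)}=\langle\phi_t u,v\rangle_{V(t)^*,V(t)}$ for all $v\in V(t)$, so that $\|\phi_t u\|_{V(t)^*}=\|\psi_t\|_{V(t)}$. Setting $\xi_t:=\phi_{-t}\psi_t\in V_0$ and testing against $v=\phi_t\eta$ turns this into a problem on the fixed space: find $\xi_t\in V_0$ with $a_t(\xi_t,\eta)=\ell_t(\eta)$ for all $\eta\in V_0$, where $a_t(\xi,\eta):=(\phi_t\xi,\phi_t\eta)_{V(t)}$ and $\ell_t(\eta):=\langle u,\phi_t^*\phi_t\eta\rangle$ (using Definition \ref{def:DualSpaceMapDefintion}), with $\|\phi_t u\|_{V(t)^*}^2=a_t(\xi_t,\xi_t)$. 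The forms $a_t$ are uniformly coercive and bounded on $V_0$ by Lemma \ref{lem:phiBoundedWithInverse}. The crux is to show that $t\mapsto a_t$ and $t\mapsto\ell_t$ are continuous in the operator, respectively dual, norm on $V_0$: for $a_t$ this is the polarised version of the $V$-continuity above, while for $\ell_t$ it reduces to operator-norm continuity of $t\mapsto\phi_t^*\phi_t$ on $V_0$, which again follows from the uniform-in-space $C^1$-continuity of the Piola coefficients guaranteed by $k\ge 2$. Given uniform coercivity and these continuities, a standard Lax--Milgram perturbation estimate yields $\xi_t\to\xi_{t_0}$ in $V_0$ as $t\to t_0$, and joint continuity of $(t,\xi)\mapsto a_t(\xi,\xi)$ gives continuity of $t\mapsto\|\phi_t u\|_{V(t)^*}$. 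Alternatively one may first reduce to $u$ in the dense subspace $H_0\subset V_0^*$ and extend to all of $V_0^*$ via the uniform bound of Lemma \ref{lem:BoundednessForMapOnDual}, since $\bigl|\,\|\phi_t u\|_{V(t)^*}-\|\phi_t u_n\|_{V(t)^*}\,\bigr|\le C\|u-u_n\|_{V_0^*}$ holds uniformly in $t$, so the continuous functions $t\mapsto\|\phi_t u_n\|_{V(t)^*}$ converge uniformly to the desired one.
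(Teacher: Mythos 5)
Your proposal is correct, and for the first two bullet points of compatibility and the continuity of $t\mapsto\|\phi_t u\|_{H(t)}$, $\|\phi_t u\|_{V(t)}$ it coincides with the paper's argument (separability and Hilbert structure, $\Phi_0=\Id_D$, the bounds of Lemmas \ref{lem:phiBoundedWithInverse} and \ref{lem:BoundednessForMapOnDual}, and continuity read off from the transformed integrals with $\det D\Phi_t\equiv 1$). Where you genuinely diverge is the dual-norm continuity. The paper proves a quantitative Lipschitz comparison of the transported test-function norms, $\bigl|\,\|\phi_t^*v\|_{V(t)}^2-\|\phi_s^*v\|_{V(s)}^2\,\bigr|\le C|t-s|\,\|\phi_s^*v\|_{V(s)}^2$, and pushes it through the duality pairing of Definition \ref{def:DualSpaceMapDefintion} to obtain the two-sided bound $(1+C|t-s|)^{-1/2}\|\phi_s\eta\|_{V^*(s)}\le\|\phi_t\eta\|_{V^*(t)}\le(1-C|t-s|)^{-1/2}\|\phi_s\eta\|_{V^*(s)}$; this is short and yields a Lipschitz modulus, but it implicitly requires matching the $t$-dependent sets of admissible test functions $\phi_t^*(V(t))\subset H^1_0(\Omega_0;\R^d)$ at different times, a point the paper leaves terse. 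You instead realise $\|\phi_t u\|_{V(t)^*}$ as $a_t(\xi_t,\xi_t)^{1/2}$ for the solution $\xi_t\in V_0$ of a fixed-domain variational problem with uniformly coercive, uniformly bounded forms $a_t(\xi,\eta)=(\phi_t\xi,\phi_t\eta)_{V(t)}$ and data $\ell_t(\eta)=\langle u,\phi_t^*\phi_t\eta\rangle=\langle u,D\Phi_t^TD\Phi_t\,\eta\rangle$, and conclude by a Lax--Milgram perturbation argument; this is softer (it gives only qualitative continuity, not a Lipschitz rate) but it works entirely on $V_0$ with a fixed test space, so the matching issue never arises, and the needed operator-norm continuity of $t\mapsto a_t$ and $t\mapsto\ell_t$ follows from the uniform-in-space time-continuity of $D\Phi_t$, $D\Phi_t^{-1}$, $D^2\Phi_t$ exactly as in your $V$-case. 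One caveat: your closing alternative via density of $H_0$ in $V_0^*$ is not self-contained as stated, since continuity of $t\mapsto\|\phi_t u_n\|_{V(t)^*}$ for $u_n\in H_0$ is not supplied by the $H$- or $V$-continuity and would still require either your main argument or the paper's estimate; since the Lax--Milgram route is complete on its own, this remark is dispensable.
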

\begin{proof}
	It is standard to verify that $V^*(t)$, $H(t)$ and $V(t)$ are real and separable.
	They are Hilbert spaces as closed subspaces of Hilbert space.
	Further it is seen that $\phi_0$ is the identity since $\Phi_0$ is defined to be the identity map, then the derivative of the identity map is the identity matrix.
	The bounds are shown in Lemmas \ref{lem:phiBoundedWithInverse} and \ref{lem:BoundednessForMapOnDual}.

	In order to show continuity of the map $t \mapsto (\|\phi_t u\|_{H(t)}, \|\phi_t v\|_{V(t)})$ for any $u \in \HO$ and $v \in \VO$, inspection of the formulae in the proof of Lemma \ref{lem:phiBoundedWithInverse} suffices, noting that all of the terms are assumed to be continuous in $t$.
	To show continuity of the dual norm, we make use of the following explicit definition of the dual norm:
	\begin{equation}
		\|\phi_t v\|_{H^{-1}} = \sup_{\xi \in H^1(\Omega_0;\R^d)} \frac{ \langle \phi_t v, \phi_t \xi\rangle}{\| \phi_t \xi \|_{H^1}}.
	\end{equation}
	Let us choose $\xi_t \in H^1(\Omega_0;\R^d)$ such that $\xi_t \neq 1$ and $\|\phi_t v\|_{H^{-1}} = \|\phi_t \xi_t\|_{H^1}^{-1}\langle \phi_t v, \phi_t \xi_t\rangle$.
	It then holds that $\|\phi_s v\|_{H^{-1}} \geq \|\phi_s \xi_t\|_{H^1}^{-1 }\langle \phi_s v, \phi_s \xi_t\rangle$.
	Furthermore,
	\begin{equation}\label{eq:dualNormContinuityCalc}
		\|\phi_t v\|_{H^{-1}} - \|\phi_s v\|_{H^{-1}}
		\leq
		\langle v, \phi_t^* \phi_t \xi_t - \phi_s^* \phi_s \xi_t \rangle \|\phi_t \xi_t\|_{H^1}^{-1} + \langle v, \phi_s^* \phi_s \xi_t \rangle \left( \frac{1}{\|\phi_t\xi_t\|_{H^1}} - \frac{1}{\|\phi_s \xi_t \|_{H^1}}\right) .
	\end{equation}
	From the regularity of $\Phi$, the form of $\phi_t^* \phi_t$, and the previously stated continuity of $t \mapsto \|\phi_t \xi \|_{H^1}$, it follows that the right hand side of \eqref{eq:dualNormContinuityCalc} tends to zero as $s \to t$.
	By repeating the same argument exchanging $s$ and $t$, it holds that $t \mapsto \|\phi_t v\|_{H^{-1}}$ is continuous.
\end{proof}.

\subsection{Time-dependent Bochner spaces}

\label{sec:Time-dependentBocnherSpaces} 
We now define the time-dependent Bochner spaces which will be of use for well-posedness.
The following definition may be found in \cite{AlpEllSti15A}.
\begin{definition}
Let $X(t)$ be a family of Hilbert spaces and $\phi_t$ a family of maps which has extension onto $X(t)^*$.
Furthermore, let
\begin{align}
L^2_X : = \{ u : [0,T] \to \cup_t X(t) \times \{ t\} , t \mapsto (\overline{u}, t) | \phi_{-(\cdot)} \overline{u} (\cdot) \in L^2(0,T;X_0)\},
\\
L^2_{X^*} : = \{ f : [0,T] \to \cup_t X^*(t) \times \{ t\} , t \mapsto (\overline{f}, t) | \phi_{-(\cdot)} \overline{f} (\cdot) \in L^2(0,T;X^*_0)\},
\end{align}
with inner products
\begin{align}
\label{eq:innerProductL2X}
(u,v)_{L^2_X} := \int_0^T (u(t), v(t))_{X(t)} \dee t
\\
\label{eq:innerProductL2XStar}
(f, g)_{L^2_{X^*}}:= \int_0^T (f(t), g(t))_{X^*(t)} \dee t.
\end{align}
\end{definition}
In the above definition, at each $t\in (0,T)$, $u(t) = (\bar{u}(t), t)$ where $\bar{u}(t)$ is an element of $X(t)$ for almost every $t$.
We will identify $u(t)$ with $\bar{u}(t)$ for convenience.
We note that our definition of $L^2_{X^*}$ does not use the map $\phi_t^*$ as appears in the corresponding definition of \cite{AlpEllSti15A}.
We are instead using the map $\phi_t$ as in Definition \ref{def:DualSpaceMapDefintion}.

From proposition \ref{prop:wholeSpaceCompatible} it follows that $L^2_{H^1}$, $L^2_{L^2}$, and $L^2_{H^-1}$ are Hilbert spaces with the associated inner products, and that $L^2_{H^1}\subset L^2_{L^2} \subset L^2_{H^{-1}}$ is a Gelfand triple.
Similarly, from \ref{prop:compatibility}, the same conclusion follows for $L^2_V \subset L^2_H \subset L^2_{V^*}$.
We note that $\left(L^2_V\right)^*$ is identified with $L^2_{V^*}$, for further properties of these spaces, we refer the reader to \cite{AlpEllSti15A}.

\subsection{Strong and weak material derivative}

\label{MaterialDerivative}
Since the domain is changing in time, it is necessary to consider the so-called material derivative, which takes into account not only the change in time of the function but the change in time of the domain.
Recall first the definition of the strong material derivative as it appears in \cite[Definition 2.20]{AlpEllSti15A}.
\begin{definition}
\label{def:StrongMaterialDerivative}
Let $(X,\phi)$ be a compatible pair, for $\xi \in C^0_X:=C^0(0,T;X)$,  we say $\xi \in C^1_X:=C^1(0,T;X)$ if the strong material derivative
\begin{equation}
\mdp{\xi} (t) := \phi_t \left( \frac{d}{dt} (\phi_{-t} \xi(t)) \right),
\end{equation}
exists with $\mdp{\xi} \in C^0_X$.
\end{definition}
We choose to use the notation $\mdp{(\cdot)}$ to emphasise the contribution of $\phi_t$ to the material derivative and further differentiate it from the standard material derivative.

Now we compute the strong material derivative induced by the Piola transformation given by $\phi_t$.
Note that this is different than the standard material derivative $\partial_\w^\bullet$ induced by the plain pull-back transformation.
For convenience, let us observe, by the definition of $\mdp{(\cdot)}$, and the product rule:
\begin{equation}\label{eq:DifferenceBetweenMaterialDerivatives}
	\begin{split}
		\mdp{\xi} &= \left(\frac{\dee}{\dee t} \left( \xi \circ \Phi_t\right) \right)\circ \Phi_{-t} + \left( D\Phi_t \frac{\dee}{\dee t}\left( D\Phi_t^{-1} \right) \right)\circ \Phi_t^{-1} \xi
		\\
		&= \partial_\w^\bullet \xi + \left( D\Phi_t \frac{\dee}{\dee t}\left( D\Phi_t^{-1} \right) \right)\circ \Phi_t^{-1} \xi.
	\end{split}
\end{equation}

It is a standard case that, if we seek a solution to Problem \ref{prob:NonZeroDirichlet} in $C^1_V$, there might not be a solution.
Essentially, it might be too much to ask for existence of a strong material derivative.
For this reason we define the weak derivative as done in \cite[Section 2.4]{AlpEllSti15A}.
The main idea of the weak derivative is to use the transport theorem \cite[Theorem 5.1]{DziEll13}, which describes how the inner product on the pivot space varies in time.
One has the following version of the Transport theorem.
\begin{theorem}%
  \label{thm:transportFormula}
	Let $u,\,v \in C_{H^1}^1$, then
	\begin{equation}\label{eq:lambdaDecomposition}
		\frac{\dee}{\dee t} \left( u(t),v(t) \right)_{L^2}
		=
		\left( \mdp{u},v \right)_{L^2} + \left(u,\mdp{v} \right)_{L^2} + \lambda(t;u(t),v(t)),
	\end{equation}
	where $\lambda(t,;\cdot,\cdot)\colon L^2(\Omega(t);\R^d) \times L^2(\Omega(t);\R^d)  \to \R$ is a bounded bilinear form given by
\begin{equation}\label{eq:definitionOfALambda}
	\lambda(t;u(t),v(t)) := \int_{\Omega(t)} u(t) \cdot \left( D\Phi_t^{-T} \frac{\dee}{\dee t} \left( D\Phi_t^T D\Phi_t \right)  D\Phi_t^{-1}  \right)\circ \Phi_t^{-1} v(t).
\end{equation}
\end{theorem}
\begin{proof}
Using that $\det(D\Phi_t) = 1$, it holds that
\begin{align}
	\frac{\dee}{\dee t} \int_{\Omega(t)} u(t)\cdot v(t)
	=&
	\frac{\dee}{\dee t}\int_{\Omega_0} (u\circ \Phi_t )\cdot (v \circ \Phi_t) \det(D\Phi_t)
	=
	\frac{\dee}{\dee t}\int_{\Omega_0} \left(D\Phi_t \phi_{-t} u\right) \cdot \left(D\Phi_t \phi_{-t} v\right)
	\\
	=&
	\int_{\Omega_0} D\Phi_t\frac{\dee}{\dee t} \left( \phi_{-t} u \right) \cdot \left( D\Phi_t \phi_{-t} v\right) + \left( D\Phi_t \phi_{-t} u\right)\cdot  D\Phi_t\frac{\dee}{\dee t} \left( \phi_{-t} v \right)
	\\
	&+ \phi_{-t} u \cdot \frac{\dee}{\dee t}\left( D\Phi_t^T D\Phi_t  \right) \phi_{-t} v \nonumber
	\\
	=&
	\int_{\Omega(t)} \mdp{u} \cdot v + u \cdot \mdp{v} + u \cdot \left( D\Phi_t^{-T} \frac{\dee}{\dee t} \left( D\Phi_t^T D\Phi_t \right)  D\Phi_t^{-1}  \right)\circ \Phi_t^{-1} v.
\end{align}
Boundedness of $\lambda$ follows from inspection and the assumed regularity of $\Phi$.
\end{proof}

Before we give the definition for a weak derivative, we define the test functions.
\begin{definition}
	We define the space $D(0,T)$ to be given by
	\begin{equation}
		D(0,T):= \{ v\colon [0,T] \to \bigcup_{t\in[0,T]}H_0^1(\Omega(t);\R^d) \times \{t\}, t \mapsto (\bar{v},t) : \phi_{-(\cdot)}\bar{v}(\cdot) \in C^1_0(0,T;H^1_0(\Omega_0;\R^d))  \}.
	\end{equation}
\end{definition}
The following definition of the weak material derivative is similar to that which appears in \cite[Definition 2.28]{AlpEllSti15A}.
\begin{definition}%
\label{def:WeakMaterialDerivative}
For $u \in
L^2_{H^1}$, the function $g \in L^2_{H^{-1}}$ is called the weak material derivative of $u$ if it holds
\begin{equation}\label{eq:weakDerivativeDef}
\int_0^T \left< g(t), \eta(t) \right>_{H^{-1}, H^1} = - \int_0^T (u(t), \mdp{\eta} (t))_{L^2} - \int_0^T \lambda(t; u(t), \eta(t)),
\end{equation}
for all $\eta \in D(0,T)$.
We write $\partial^\bullet_\phi u = g.$
\end{definition}
Let us note that the operators $\wmdw$ and $\wmdp$ are defined independently, with $\phi$ and $\w$ being different objects.
We are also abusing notation that $\wmdw$ is used for both a strong and weak material derivative.

In the definition we have given, when one takes the restriction $u \in L^2_V$, it is not immediate that $\wmdp u$ is in $L^2_{V^*}$.
For strongly differentiable functions, one may consider the following formal calculation:
\begin{equation}\begin{split}
	\Div \mdp{u}
	&=
	\Div \left( \phi_t \frac{\dee}{\dee t }\left( \phi_{-t} u\right) \right)
	=
	\left( \Div \left( \frac{\dee}{\dee t}\left( \phi_{-t} u \right)\right) \right)\circ \Phi_t^{-1}
	\\
	&=
	\left( \frac{\dee}{\dee t}\left( \Div \left( \phi_{-t} u \right)\right) \right)\circ \Phi_t^{-1}
	=
	\left( \frac{\dee}{\dee t}\left(  \left( \Div u \right)\circ \Phi_t\right) \right)\circ \Phi_t^{-1}
	\\
	&= 0,
\end{split}\end{equation}
which makes use of Lemma \ref{lem:divPreserving}, that $\Div u = 0$ and using regularity to commute $\Div$ and $\frac{\dee}{\dee t}$.
The following lemma verifies this result for weakly differentiable functions .

\begin{lemma}
	Let $\wmdp u $ be the weak derivative of $u \in L^2_V$, then it holds that $\wmdp u \in L^2_{V^*}$.
\end{lemma}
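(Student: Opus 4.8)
The plan is to upgrade the a priori membership of $g := \wmdp u$, which we already know satisfies $\phi_{-(\cdot)}g \in L^2(0,T;H^{-1}(\Omega_0;\R^d))$ by the definition of the space in which the weak material derivative lives, to the statement that $g(t)$ is weakly divergence free for almost every $t$. By Lemma~\ref{lem:ExtendedMapVStarToVStarT} it suffices to show that $\bar g(t) := \phi_{-t}g(t)$ lies in $V_0^*$ for almost every $t$, that is $\langle \bar g(t),\nabla q\rangle_{\Omega_0} = 0$ for every $q \in C_0^\infty(\Omega_0;\R)$, where $\langle\cdot,\cdot\rangle_{\Omega_0}$ denotes the $H^{-1}(\Omega_0;\R^d)$--$H^1_0(\Omega_0;\R^d)$ pairing. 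I would obtain this by inserting carefully chosen gradient-type test functions into the defining identity \eqref{eq:weakDerivativeDef}.

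For the \emph{construction of the test function}, fix $q \in C_0^\infty(\Omega_0;\R)$ and $\theta \in C_c^\infty(0,T)$, and set in the reference domain
\[
\bar\eta(t) := \theta(t)\,\bigl(D\Phi_t^T D\Phi_t\bigr)^{-1}\nabla q, \qquad \eta := \phi_{(\cdot)}\bar\eta .
\]
Using Assumption~\ref{ass:AssumptionsForW} one verifies that $t \mapsto (D\Phi_t^T D\Phi_t)^{-1}$ is $C^1$ in time with smooth-in-space matrix values, so that $\phi_{-(\cdot)}\eta = \bar\eta \in C^1_0(0,T;H^1_0(\Omega_0;\R^d))$ and hence $\eta \in D(0,T)$. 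Exploiting the covariant transformation rule for gradients $\phi_{-t}^*\nabla q = \nabla(q\circ\Phi_t^{-1})$ (as in the proof of Lemma~\ref{lem:ExtendedMapVStarToVStarT}) one finds $\eta(t) = \theta(t)\nabla(q\circ\Phi_t^{-1})$, whence the left-hand side of \eqref{eq:weakDerivativeDef} becomes $\int_0^T \theta(t)\,\langle \bar g(t),\nabla q\rangle_{\Omega_0}\,\dee t$.

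The key step is to show that the right-hand side of \eqref{eq:weakDerivativeDef} vanishes for this $\eta$. Writing $\bar u := \phi_{-(\cdot)}u \in L^2(0,T;V_0)$ and using $\det(D\Phi_t)=1$ from Lemma~\ref{lem:propertiesOfDomainAndDet} together with $u = \phi_t\bar u$ and $\mdp{\eta} = \phi_t\frac{\dee}{\dee t}\bar\eta$, I would pull both remaining terms back to $\Omega_0$. A direct computation, using the explicit form \eqref{eq:definitionOfALambda} of $\lambda$, shows that the two terms combine into a single time derivative,
\[
(u,\mdp{\eta})_{L^2(\Omega(t))} + \lambda(t;u,\eta) = \int_{\Omega_0} \bar u(t)\cdot \frac{\dee}{\dee t}\Bigl[\bigl(D\Phi_t^T D\Phi_t\bigr)\bar\eta(t)\Bigr]\,\dee x .
\]
By the choice of $\bar\eta$ we have $(D\Phi_t^T D\Phi_t)\bar\eta(t) = \theta(t)\nabla q$, so the bracket equals $\theta'(t)\nabla q$ and the expression reduces to $\theta'(t)\int_{\Omega_0}\bar u(t)\cdot\nabla q\,\dee x$, which is zero because $\bar u(t)\in V_0$ is weakly divergence free. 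Hence the right-hand side of \eqref{eq:weakDerivativeDef} vanishes.

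Combining the two evaluations gives $\int_0^T \theta(t)\langle \bar g(t),\nabla q\rangle_{\Omega_0}\,\dee t = 0$ for all $\theta \in C_c^\infty(0,T)$, so $\langle \bar g(t),\nabla q\rangle_{\Omega_0}=0$ for almost every $t$; choosing a countable family $\{q_j\}\subset C_0^\infty(\Omega_0;\R)$ whose gradients are dense in the relevant subspace of $H^1_0(\Omega_0;\R^d)$ makes the exceptional null set independent of the test function, and I conclude that $\bar g(t)\in V_0^*$ for almost every $t$. Since $V_0^*$ is a closed subspace of $H^{-1}(\Omega_0;\R^d)$ and $\bar g \in L^2(0,T;H^{-1}(\Omega_0;\R^d))$, this yields $\bar g \in L^2(0,T;V_0^*)$, that is $\wmdp u \in L^2_{V^*}$. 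I expect the main obstacle to be the algebraic identity combining $(u,\mdp{\eta})_{L^2(\Omega(t))}$ and $\lambda$ into one time derivative of $(D\Phi_t^T D\Phi_t)\bar\eta$; once this is established, the choice of $\bar\eta$ that trivialises $(D\Phi_t^T D\Phi_t)\bar\eta$ is exactly what makes the divergence-free property of $\bar u$ applicable, and the remaining measurability and density arguments are routine.
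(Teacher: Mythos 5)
Your proof is correct, and it rests on the same core idea as the paper's: insert gradient fields into the defining identity \eqref{eq:weakDerivativeDef} and let the weak divergence-freeness of $u(t)$ annihilate the right-hand side. The execution, however, is organised differently. The paper tests directly with time-dependent gradients $\nabla q$ on the moving domain and verifies termwise cancellations between $\mdp{(\nabla q)}$ and $\lambda(t;u,\nabla q)$ (see \eqref{eq:calculateLambdaForDivFree}--\eqref{eq:FinalStepOfShowingWeakDerivativeDiv0}), ending with $u$ paired against a gradient. You instead pull everything back to $\Omega_0$, where $\det(D\Phi_t)=1$ (Lemma \ref{lem:propertiesOfDomainAndDet}) lets you collapse $(u,\mdp{\eta})_{H(t)}+\lambda(t;u,\eta)$ into the single expression $\int_{\Omega_0}\bar{u}\cdot\frac{\dee}{\dee t}\left[\left(D\Phi_t^T D\Phi_t\right)\bar{\eta}\right]$ --- this is exactly the transport computation that produced \eqref{eq:definitionOfALambda} --- and you choose $\bar{\eta}=\theta(t)\left(D\Phi_t^T D\Phi_t\right)^{-1}\nabla q$ precisely so that the bracket equals $\theta(t)\nabla q$ and its derivative is $\theta'(t)\nabla q$, which $\bar{u}(t)\in V_0$ kills. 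Note that your test functions are in fact a special case of the paper's: $\eta(t)=\theta(t)\nabla(q\circ\Phi_t^{-1})$ is the gradient of the scalar potential $\theta(t)\,q\circ\Phi_t^{-1}$, and for this choice the paper's final formula \eqref{eq:FinalStepOfShowingWeakDerivativeDiv0} reduces to yours. What your variant buys: the matrix algebra is packaged into one identity rather than a sequence of cancellations, and the factorised-in-time test functions together with the countable-density argument give the a.e.-in-$t$ statement $\phi_{-t}\wmdp u(t)\in V_0^*$ explicitly, a localisation step the paper leaves implicit when it stops at the time-integrated identity. Two small points to keep in view: the admissibility $\eta\in D(0,T)$ of your test function needs the $C^1$-in-time, $C^{k-1}$-in-space regularity of $\left(D\Phi_t^T D\Phi_t\right)^{-1}$, which you correctly trace to Assumption \ref{ass:AssumptionsForW}; and the passage from $\langle\phi_{-t}\wmdp u(t),\nabla q\rangle=0$ to $\wmdp u(t)\in V(t)^*$ uses, as you note, the identity $\phi_t\phi_{-t}=\mathrm{id}$ on $H^{-1}$ together with Lemma \ref{lem:ExtendedMapVStarToVStarT}, both of which hold by the duality definitions.
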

\begin{proof}
	It is immediate that $\wmdp u$ lies in $L^2_{H^{-1}}$, in order to show that $\wmdp u$ is in $L^2_{V^*}$, we are required to show that it is weakly divergence free.
	More precisely, we are required to show
	\begin{equation}
		\int_0^T \langle \wmdp u , \nabla q \rangle_{H^{-1}, H^1} = 0
	\end{equation}
	for all appropriately smooth $q$ with $\nabla q \in D(0,T)$.
	By the definition of weak derivative \eqref{eq:weakDerivativeDef} we have that
	\begin{equation}
	\int_0^T \langle \wmdp u , \nabla q \rangle_{H^{-1}, H^1}
	=
	-\int_0^T \left( u(t), \mdp{(\nabla q )}(t)\right)_{L^2(\Omega(t);\R^d)} - \int_0^T \lambda(t;u(t),\nabla q(t)).
	\end{equation}
	It is convenient to calculate $\lambda(t;u(t),\nabla q(t))$,
	\begin{equation}\label{eq:calculateLambdaForDivFree}
	\begin{split}
		\lambda(t;u(t),\nabla q(t))
		=&
		\int_{\Omega(t)} u \cdot \left( D\Phi_t^{-T} \frac{\dee}{\dee t}\left( D\Phi_t^T D \Phi_t \right) D\Phi_t^{-1} \right) \circ \Phi_t^{-1} \nabla q
		\\
		=&
		\int_{\Omega(t)} u \cdot \left(  D\Phi_t^{-T} \frac{\dee}{\dee t}\left( D\Phi_t^T\right) +  \frac{\dee}{\dee t}\left( D \Phi_t \right) D\Phi_t^{-1}  \right) \circ \Phi_t^{-1} \nabla q.
	\end{split}
	\end{equation}
	We also calculate $\mdp{(\nabla q)}$ to compare,
	\begin{equation}\label{eq:mdpNablaQ}\begin{split}
		\mdp{(\nabla q)}
		&=
		\left( D\Phi_t \frac{\dee}{\dee t}\left( D\Phi_t^{-1} \left( \left(\nabla q\right) \circ \Phi_t \right) \right) \right) \circ \Phi_t^{-1}
		\\
		&=
		\left( \frac{\dee}{\dee t} \left( \left( \nabla q \right) \circ \Phi_t \right) \right)\circ \Phi_t^{-1} - \left( \frac{\dee}{\dee t} \left( D\Phi_t \right)D\Phi_t^{-1}\right) \circ \Phi_t^{-1} \nabla q,
	\end{split}\end{equation}
	we see that the contribution of the second term of the above will cancel out the contribution of the second term of the final equality in \eqref{eq:calculateLambdaForDivFree}.
	We now calculate, for the remaining contribution of \eqref{eq:mdpNablaQ}
	\begin{equation}
	\begin{split}
		\frac{\dee}{\dee t} \left( \left(\nabla q\right) \circ \Phi_t \right)\circ \Phi_t^{-1}%
		=&
		\left(\frac{\dee}{\dee t }\left( D\Phi_t^{-T} \nabla \left(q \circ \Phi_t \right) \right) \right)\circ \Phi_t^{-1}%
		\\
		=&
		\left( \frac{\dee}{\dee t}\left( D\Phi_t^{-T} \right) \nabla \left(q \circ \Phi_t \right) + D\Phi_t^{-T} \frac{\dee}{\dee t} \nabla\left(q \circ \Phi_t \right) \right) \circ \Phi_t^{-1}
		\\
		=&
		\left( \frac{\dee}{\dee t}\left( D\Phi_t^{-T} \right) D\Phi_t^T \nabla q + D\Phi_t^{-T} \frac{\dee}{\dee t} \nabla\left(q \circ \Phi_t \right) \right) \circ \Phi_t^{-1}.
	\end{split}
	\end{equation}
	We see the contribution of the first term in the final equality of the above will cancel out with the first term of \eqref{eq:calculateLambdaForDivFree}.
	We are now left to handle the remaining term,
	\begin{equation}
		\begin{split}
			\left( D\Phi_t^{-T} \frac{\dee}{\dee t} \nabla \left(q \circ \Phi_t \right) \right) \circ \Phi_t^{-1}
			=&
			\left( D\Phi_t^{-T}  \nabla \frac{\dee}{\dee t}\left(q \circ \Phi_t \right) \right) \circ \Phi_t^{-1}
			\\
			=&
			\nabla \left( \left(\frac{\dee}{\dee t}\left(q \circ \Phi_t\right)\right)\circ \Phi_t^{-1} \right),
		\end{split}
	\end{equation}
	where we are able to exchange the order of $\frac{\dee}{\dee t}$ and $\nabla$ by the smoothness of $q \circ \Phi_t$ and we have made use of the fact that $\nabla\left(f \circ \Phi_t^{-1}\right) = \left(D\Phi_t^{-T} \nabla f \right)\circ \Phi_t^{-1}$.
	We have therefore shown that
	\begin{equation}\label{eq:FinalStepOfShowingWeakDerivativeDiv0}
		\int_0^T \langle \wmdp u , \nabla q \rangle_{H^{-1}, H^1}
		=
		- \int_0^T \left( u(t), \nabla \left(  \left(\frac{\dee}{\dee t}\left(q \circ \Phi_t\right) \right)\circ \Phi_t^{-1}\right) \right)_{L^2},
	\end{equation}
	which vanishes, since $u(t)$ is divergence free for a.e. $t \in (0,T)$ in the sense of \eqref{eq:WeaklyDivergenceFree}.
\end{proof}

\subsection{Solution space}
\label{SolutionSpace}
Having the concept of Gelfand triple of evolving Hilbert spaces and weak material derivative, we can now define the solution space, following the general concept presented in \cite[Section 2.5]{AlpEllSti15A}.

\begin{definition}[Solution space]\label{def:SolSpace}
The solution space is defined by
\begin{equation}
W(V,V^*) := \{ u \in L^2_V : \partial_\phi^\bullet u \in L^2_{V^*}\}
\end{equation}
and it is endowed with the inner product
\begin{equation}\label{eq:MovingSpacesInnerProduct}
(u, v)_{W(V,V^*) }:= \int_0^T (u(t), v(t))_{H^1} + \int_0^T ( \partial_\phi^\bullet u(t),  \partial_\phi^\bullet v(t))_{H^{-1}}.
\end{equation}
\end{definition}
In order to prove properties of the solution space, we will connect it with the standard Sobolev-Bochner space on the fixed domain, which is defined by
\begin{equation}
\mathcal{W}(\VO, \VO^*) := \{ v \in L^2(0,T; \VO ) : v' \in L^2(0,T;\VO^*)\},
\end{equation}
where the weak derivative $v'$ of $v$ is defined by
\begin{equation}\label{eq:WeakDerivativeStationarDef}
	\int_0^T \langle v',\eta\rangle_{H^{-1}, H^1} = - \int_0^T \left( u,\frac{\dee}{\dee t}\eta\right)_{L^2} \quad \forall \eta \in C^1(0,T;H^1_0(\Omega_0;\R^d)).
\end{equation}

\begin{proposition}\label{prop:evolvingSpaceEquivalence}
There is an evolving space equivalence between $W(V,V^*)$ and $\mathcal{W}(\VO, \VO^*)$ in the sense:
\begin{equation}\label{spEquivalence1}
v \in W(V,V^*) \quad \text{ if and only if } \quad \phi_{-(\cdot)}v(\cdot) \in \mathcal{W}(\VO, \VO^*)
\end{equation}
and there are $C_1,\, C_2>0$ such that
\begin{equation}\label{normEquivalence}
C_1 \| \phi_{-(\cdot)} v(\cdot) \|_{\mathcal{W}(\VO, \VO^*)} \leq \| v\|_{W(V,V^*)} \| \leq C_2 \| \phi_{-(\cdot)} v(\cdot) \|_{\mathcal{W}(\VO, \VO^*)}
\end{equation}
for all $v \in W(V,V^*)$.
\end{proposition}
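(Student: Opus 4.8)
The plan is to establish the equivalence in two layers—first at the $L^2$ level and then at the level of derivatives—while identifying the precise relation $(\phi_{-(\cdot)}v)' = \phi_{-(\cdot)}\wmdp v$ between the stationary weak time derivative of the pullback and the pullback of the weak material derivative. The $L^2$ layer is immediate from the definitions: by construction of $L^2_V$ one has $v\in L^2_V$ if and only if $\phi_{-(\cdot)}v(\cdot)\in L^2(0,T;V_0)$, and the two-sided bounds of Lemma \ref{lem:phiBoundedWithInverse}, together with the uniform-in-$t$ continuity from Proposition \ref{prop:compatibility}, give comparability of $\|v\|_{L^2_V}$ and $\|\phi_{-(\cdot)}v\|_{L^2(0,T;V_0)}$. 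The dual-side analogue, needed for the derivative layer, uses Lemma \ref{lem:BoundednessForMapOnDual}.

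For the derivative layer I write $v(t)=\phi_t a(t)$ with $a=\phi_{-(\cdot)}v$, and exploit the bijection $\eta\leftrightarrow\zeta:=\phi_{-(\cdot)}\eta$ between $D(0,T)$ and $C^1_0(0,T;H^1_0(\Omega_0;\R^d))$. I then transform the defining identity \eqref{eq:weakDerivativeDef} to the reference domain. Since $\mdp{\eta}=\phi_t\zeta'$ and $\det(D\Phi_t)=1$ by Lemma \ref{lem:propertiesOfDomainAndDet}, the change of variables gives, writing $M(t):=D\Phi_t^T D\Phi_t$, that $(v,\mdp{\eta})_{L^2(\Omega(t))}=\int_{\Omega_0} a^T M\zeta'$ and, using the explicit form \eqref{eq:definitionOfALambda}, that $\lambda(t;v,\eta)=\int_{\Omega_0}a^T M'\zeta$. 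The product rule $M\zeta'+M'\zeta=\tfrac{d}{dt}(M\zeta)$ then collapses the right-hand side of \eqref{eq:weakDerivativeDef} to $-\int_0^T (a,(M\zeta)')_{L^2(\Omega_0)}$.

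For the left-hand side I use the key pairing identity $\langle g,\phi_t\zeta\rangle_{\Omega(t)}=\langle\phi_{-t}g,M\zeta\rangle_{\Omega_0}$, which follows from Definition \ref{def:DualSpaceMapDefintion} together with the computation $\phi_{-t}^*(M\zeta)=\phi_t\zeta$, in which the covariant and contravariant Piola factors cancel. Substituting $\xi:=M\zeta$—a bijection of $C^1_0(0,T;H^1_0(\Omega_0;\R^d))$ because $M$ and $M^{-1}$ are $C^1$ in time with smooth-in-space values by Assumption \ref{ass:AssumptionsForW} and Lemma \ref{lem:RegularityPhi}—turns \eqref{eq:weakDerivativeDef} into exactly the stationary definition \eqref{eq:WeakDerivativeStationarDef} for $a$, with derivative $\phi_{-(\cdot)}g$. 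Hence $v$ has a weak material derivative in $L^2_{V^*}$ if and only if $a$ has a weak time derivative in $L^2(0,T;V_0^*)$, and then $(\phi_{-(\cdot)}v)'=\phi_{-(\cdot)}\wmdp v$; the divergence-freeness needed to land in $V_0^*$ rather than merely $H^{-1}(\Omega_0;\R^d)$ comes from the lemma preceding Section \ref{SolutionSpace} and from Lemma \ref{lem:ExtendedMapVStarToVStarT}. Combining this identity with Lemma \ref{lem:BoundednessForMapOnDual} yields the norm equivalence \eqref{normEquivalence}.

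I expect the main obstacle to be this derivative layer, and within it the pairing identity $\langle g,\phi_t\zeta\rangle_{\Omega(t)}=\langle\phi_{-t}g,M\zeta\rangle_{\Omega_0}$: this is the point at which the nonstandard choice of dual map (via $\phi_t$ as in Definition \ref{def:DualSpaceMapDefintion}, rather than $\phi_t^*$) must be reconciled with the transport form $\lambda$. It is precisely the cancellation of the metric $M$ between this pairing identity and the substitution $\xi=M\zeta$, combined with the product-rule collapse $M\zeta'+M'\zeta=(M\zeta)'$, that reduces the whole computation to the plain $L^2(\Omega_0;\R^d)$ pairing and makes the transformed identity coincide with \eqref{eq:WeakDerivativeStationarDef}.
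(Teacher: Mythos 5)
Your proposal is correct and follows essentially the same route as the paper: both rest on establishing the identity $(\phi_{-(\cdot)}v)' = \phi_{-(\cdot)}\wmdp v$ by transporting the weak identities between $\Omega(t)$ and $\Omega_0$ via the covariant/contravariant Piola maps, with the transport term $\lambda$ absorbed through the product rule applied to $M(t)=D\Phi_t^T D\Phi_t$ --- indeed your substitution $\xi = M\zeta$ is precisely the paper's choice of test function $\phi_t^*\eta$, since $\phi_t^*\phi_t\zeta = M\zeta$, and your pairing identity is exactly Definition \ref{def:DualSpaceMapDefintion} in action. The only difference is organisational: you run a single bidirectional chain of identities on the reference domain, whereas the paper carries out the two implications separately with the correspondingly transformed test functions.
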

\begin{proof}
Let $\tilde{u} \in W(V,V^*)$, we wish to show $\phi_{-(\cdot)} \tilde{u}(\cdot) \in \mathcal{W}(\VO,\VO^*)$.
	By Proposition \ref{prop:compatibility} and definition of $L^2_V$, it follows $\phi_{-(\cdot)}\tilde{u}(\cdot)\in L^2(0,T;\VO)$.
	We want to show that $\left(\phi_{-t}\tilde{u}(t)\right)'$ exists as a weak derivative in $L^2(0,T;\VO^*)$ in the sense of \eqref{eq:WeakDerivativeStationarDef}.

	For test function $\eta\in C^1(0,T;H_0^1(\Omega_0;\R^d))$, we wish to calculate
	\begin{equation}
		\int_0^T \int_{\Omega_0} \phi_{-t}\tilde{u} \cdot \frac{\dee}{\dee t}\eta
	\end{equation}
	and show it is appropriately bounded.
	We have, from the weak differentiability of $\tilde{u}$, that
	\begin{equation}\label{eq:EvolvingSpaceEquivalence1KnownWeakDerivative}
		\int_0^T \langle \wmdp \tilde{u}, \phi_{-t}^*\eta \rangle_{H^{-1}, H^1}
		=
		-
		\int_0^T \int_{\Omega(t)} \tilde{u} \cdot \mdplong{\left(\phi_{-t}^* \eta\right)}
		-
		\int_0^T \lambda(t;\tilde{u},\phi_{-t}^* \eta).
	\end{equation}
	From the definition of $\phi_{-t}^*$ and the product rule on the derivative $\frac{\dee}{\dee t}$, we have that
	\begin{equation}\label{eq:EvolvingSpaceEquivalence1DerivativeOfDual}
		\mdplong{\left(\phi_{-t}^* \eta\right)}
		=
		D\Phi_t^{-T} \circ \Phi_t^{-1} \left( \frac{\dee}{\dee t} \eta \right) \circ \Phi_t^{-1} + \mdplong{\left( D\Phi_t^{-T}\circ \Phi_t^{-1}\right) }\eta \circ \Phi_t^{-1},
	\end{equation}
	where we calculate
	\begin{equation}\label{eq:EvolvingSpaceEquivalence1DerivativeOfMatrix}
		\mdplong{\left( D\Phi_t^{-T}\circ \Phi_t^{-1}\right) }\eta\circ\Phi_t^{-1}
		=
		\left(D\Phi_t \frac{\dee}{\dee t}\left( D\Phi_t^{-1} D\Phi_t^{-T}\right)D\Phi_t^T  \right) \circ \Phi_t^{-1} \phi_{-t}^* \eta.
	\end{equation}
	Furthermore, it is possible to calculate
	\begin{equation}
		D\Phi_t \frac{\dee}{\dee t}\left( D\Phi_t^{-1} \right)
		=
		- \frac{\dee}{\dee t}\left( D\Phi_t\right) D\Phi_t^{-1},
	\end{equation}
	from this, one may see that
	\begin{equation}\label{eq:EvolvingSpaceEquivalence1RearraningMatrix}
	D\Phi_t \frac{\dee}{\dee t}\left( D\Phi_t^{-1} D\Phi_t^{-T}\right)D\Phi_t^T
	=
	-
	D\Phi_t^{-T} \frac{\dee}{\dee t}\left( D\Phi_t^{T} D\Phi_t \right)D\Phi_t^{-1},
	\end{equation}
	which we notice is negative the integrand of $\lambda(\cdot;\cdot,\cdot)$.
	Making use of this in \eqref{eq:EvolvingSpaceEquivalence1KnownWeakDerivative}, one has that
	\begin{equation}\begin{split}
		\int_0^T \langle \wmdp \tilde{u}, \phi_{-t}^*\eta \rangle_{H^{-1}, H^1}
		=&
		-\int_0^T \int_{\Omega(t)} \tilde{u}\cdot D\Phi_t^{-T} \circ \Phi_t^{-1} \left( \frac{\dee}{\dee t} \eta\right) \circ \Phi_t^{-1}
		\\
		=&
		-\int_0^T \int_{\Omega_0} \phi_{-t} \tilde{u} \cdot \frac{\dee}{\dee t}\eta,
		\end{split}
	\end{equation}
	which shows, by recalling the definition of weak derivative on a stationary domain \eqref{eq:WeakDerivativeStationarDef}, that $\phi_{-t} \tilde{u}$ has a weak derivative.
	In particular, for a.e. $t$, the weak derivative exists as an element of $\VO^*$ and is given by
	\begin{equation}\label{eq:EvolvingSpaceEquivalence1FinalEquality}
		\left( \phi_{-t} \tilde{u} \right)' = \phi_{-t} \wmdp\tilde{u}.
	\end{equation}
	This has shown that $\left(\phi_{-t}\tilde{u}\right)'$ exists in $L^2(0,t;\VO^*)$, and the formula \eqref{eq:EvolvingSpaceEquivalence1FinalEquality} demonstrates that there is $C>0$ such that
	\begin{equation}
		\|\phi_{-(\cdot)}\tilde{u}(\cdot)\|_{\mathcal{W}(\VO,\VO^*)} \leq C \|\tilde{u}\|_{W(V,V^*)}.
	\end{equation}

	Now we let $u \in \mathcal{W}(\VO,\VO^*)$ and want to show $\phi_{(\cdot)} u(\cdot) \in W(V,V^*)$.
	By Proposition \ref{prop:compatibility}, it follows that $\phi_{(\cdot)} u(\cdot) \in L^2_V$.
	Now, our goal is to show that $\wmdp{\left( \phi_t u\right)}$ exists as a weak derivative in $L^2_{V^*}$.

	For test functions $\eta \in D(0,T)$, we have that
	\begin{equation}\label{eq:evolvingSpaceEquivalence2FirstStep}
		\begin{split}
		\int_0^T \left( \phi_t u, \mdp{\eta} \right)_{H(t)}
		&=
		\int_0^T \int_{\Omega(t)} \left(D\Phi_t u \right)\circ \Phi_t^{-1}\cdot \left(D\Phi_t\frac{\dee}{\dee t}\left( D\Phi_t^{-1} \eta \circ \Phi_t \right) \right)\circ \Phi_t^{-1}
		\\
		&=
		\int_0^T \int_{\Omega_0} u\cdot \left(D\Phi_t^T D\Phi_t\frac{\dee}{\dee t}\left( D\Phi_t^{-1} \eta \circ \Phi_t \right) \right).
		\end{split}
	\end{equation}
	As in the previous part of this proof, we wish to transform the above so that it is $u$ multiplied against the derivative of something times $\eta \circ \Phi_t$.
	This is done in order to utilise that $u$ has a weak derivative on the stationary domain.
	In light of this, it is convenient to note that
	\begin{equation}\label{eq:evolvingSpaceEquivalence2DerivativeEta}
		\frac{\dee}{\dee t} \left( D\Phi_t^T \eta \circ \Phi_t \right)
		=
		D\Phi_t^T D\Phi_t \frac{\dee}{\dee t}\left(D\Phi_t^{-1} \eta \circ \Phi_t \right)
		+
		\frac{\dee}{\dee t}\left( D\Phi_t^T D\Phi_t \right)D\Phi_t^{-1} \eta \circ \Phi_t,
	\end{equation}
	which is a consequence of the product rule.
	Combining \eqref{eq:evolvingSpaceEquivalence2FirstStep} and \eqref{eq:evolvingSpaceEquivalence2DerivativeEta}, one has that
	\begin{equation}\label{eq:evolvingSpaceEquivalence2SecondStep}
		\int_0^T \left( \phi_t u , \mdp\eta \right)_{H(t)}
		=
		\int_0^T \int_{\Omega_0} u \cdot \left( \frac{\dee}{\dee t}\left( D\Phi_t^T \eta \circ \Phi_t \right) - \frac{\dee}{\dee t}\left( D\Phi_t^T D\Phi_t \right) D\Phi_t^{-1}\eta \circ \Phi_t\right).
	\end{equation}
	Applying the definition of the weak derivative on a stationary domain \eqref{eq:WeakDerivativeStationarDef} with test function $D\Phi_t^T \eta \circ \Phi_t$, one has
	\begin{equation}\begin{split}
		\int_0^T \left( \phi_t u , \mdp\eta \right)_{H(t)}
		=
		-\int_0^T  \langle u' , \left( D\Phi_t^T \eta \circ \Phi_t \right) \rangle_{H^{-1}, H^1} - \int_0^T\int_{\Omega_0} u \cdot \frac{\dee}{\dee t}\left( D\Phi_t^T D\Phi_t \right) D\Phi_t^{-1}\eta \circ \Phi_t
		\\
		=-\int_0^T  \langle u' , \left( D\Phi_t^T \eta \circ \Phi_t \right) \rangle_{H^{-1}, H^1} - \int_0^T \lambda(t;\phi_t u , \eta)
		\end{split}
	\end{equation}
	where we have used that $\phi_{t}^*\eta := D\Phi_t^T\eta \circ \Phi_t$.
	Therefore, by recalling the definition of a weak derivative \eqref{eq:weakDerivativeDef}, one may conclude that for a.e $t$, as an element of $V^*(t)$,
	\begin{equation}\label{eq:evolvingSpaceEquivalence2FinalEquality}
		\wmdp\left( \phi_t u\right)
		= \phi_t \left(u'\right).
	\end{equation}

	This has shown that $\wmdp \left(\phi_{t}{u}\right)$ exists in $L^2_{V^*}$, and the formula \eqref{eq:evolvingSpaceEquivalence2FinalEquality} demonstrates that there is $C>0$ such that
	\begin{equation}
		 \|\phi_{(\cdot)} u(\cdot) \|_{W(V,V^*)} \leq C\|u\|_{\mathcal{W}(\VO,\VO^*)}.
	\end{equation}

\end{proof}
\begin{remark}
We note that the above proof is very different to the result which appears in the abstract work of \cite{AlpEllSti15A}, however the result still shows a moving space equivalence.
In the abstract setting of \cite{AlpEllSti15A}, the transformations satisfy the condition that $T_t := \phi_t^{*}\phi_t$ takes $\HO$ to $\HO$.
This is not the case with the definition of $\phi_t^*$ which appears in this work.
The transformations considered by \cite{AlpEllSti15B} which are applications of the theory of \cite{AlpEllSti15A} are '\emph{orthogonal}' in the sense that $\phi_t^* = \phi_{-t}$.
In the present setting, one could potentially change the definition of $\phi_t^*$ to achieve $\phi_t^*\phi_t\colon \HO \to \HO$, however one may then lose the ability to write down a meaningful interpretation of $\phi_t^*$.
\end{remark}

An immediate corollary of Proposition \ref{prop:evolvingSpaceEquivalence} is the following.
\begin{corollary}
	The solution space $W(V,V^*)$ with inner product \eqref{eq:MovingSpacesInnerProduct} is a Hilbert space.
\end{corollary}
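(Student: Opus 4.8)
The plan is to deduce completeness of $W(V,V^*)$ from the completeness of the fixed-domain space $\mathcal{W}(V_0,V_0^*)$ by transporting Cauchy sequences through the evolving space equivalence of Proposition \ref{prop:evolvingSpaceEquivalence}. First I would check that \eqref{eq:MovingSpacesInnerProduct} is genuinely an inner product and not merely a symmetric bilinear form: bilinearity and symmetry are inherited from the inner products on $V(t)$ and $V^*(t)$, while positive-definiteness follows because $(u,u)_{W(V,V^*)}=0$ already forces $\int_0^T \|u(t)\|_{V(t)}^2\,\dee t = 0$, hence $u=0$ in $L^2_V$ and therefore in $W(V,V^*)$. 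Thus $W(V,V^*)$ is an inner product space, and it remains only to establish completeness.

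Next I would recall that the fixed-domain Sobolev--Bochner space $\mathcal{W}(V_0,V_0^*)$ is itself a Hilbert space; this is the classical completeness result for the space $\{v\in L^2(0,T;V_0): v'\in L^2(0,T;V_0^*)\}$ associated with the Gelfand triple $V_0\subset H_0\subset V_0^*$ on the stationary domain. I would then introduce the map $\Theta\colon v\mapsto \phi_{-(\cdot)}v(\cdot)$, which by the equivalence \eqref{spEquivalence1} is a linear bijection of $W(V,V^*)$ onto $\mathcal{W}(V_0,V_0^*)$. The two-sided estimate \eqref{normEquivalence} shows that both $\Theta$ and $\Theta^{-1}$ are bounded, so $\Theta$ is a topological isomorphism between the two normed spaces.

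Completeness then transfers directly. Given a Cauchy sequence $(u_n)$ in $W(V,V^*)$, the lower bound in \eqref{normEquivalence} applied to the differences $u_n-u_m$ shows that $(\Theta u_n)$ is Cauchy in $\mathcal{W}(V_0,V_0^*)$, hence converges to some $v\in \mathcal{W}(V_0,V_0^*)$. Setting $u:=\Theta^{-1}v=\phi_{(\cdot)}v(\cdot)\in W(V,V^*)$ and applying the upper bound in \eqref{normEquivalence} to $u_n-u$ yields $\|u_n-u\|_{W(V,V^*)}\leq C_2\|\Theta u_n-v\|_{\mathcal{W}(V_0,V_0^*)}\to 0$. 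Thus every Cauchy sequence in $W(V,V^*)$ converges, so the space is complete, and being an inner product space it is a Hilbert space.

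I expect no substantial obstacle, as this is a genuine corollary of the preceding proposition. The only points requiring a little care are the verification that \eqref{eq:MovingSpacesInnerProduct} is positive-definite rather than merely positive-semidefinite, and the explicit invocation of the standard completeness of the fixed-domain Sobolev--Bochner space, which serves as the known complete space through which completeness is pulled back along the isomorphism $\Theta$.
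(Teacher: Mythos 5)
Your argument is correct and is precisely the one the paper intends: the corollary is stated as an immediate consequence of Proposition \ref{prop:evolvingSpaceEquivalence}, i.e.\ completeness is pulled back from the classical Hilbert space $\mathcal{W}(V_0,V_0^*)$ through the linear homeomorphism $v \mapsto \phi_{-(\cdot)}v(\cdot)$ furnished by \eqref{spEquivalence1} and \eqref{normEquivalence}. Your additional check that \eqref{eq:MovingSpacesInnerProduct} is positive-definite is a sensible, if routine, completion of the details the paper leaves implicit.
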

Furthermore, by application of \cite[Lemma 2.35]{AlpEllSti15A} we have:
\begin{lemma}\label{lem:embedsIntoContinuousSpace}
	The embedding $W(V,V^*)\subset C_H^0$ holds, that is, for any $t \in (0,T)$ and $u \in W(V,V^*)$ the map $t \mapsto u(t)$ is well defined.
\end{lemma}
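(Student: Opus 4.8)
The plan is to reduce the statement to the classical Lions--Magenes embedding on the fixed reference domain by means of the evolving space equivalence of Proposition \ref{prop:evolvingSpaceEquivalence}, and then transport continuity back to the moving family using the compatibility of $(H,\phi)$. First I would recall that, in the compatible-pair formalism underlying Definition \ref{def:StrongMaterialDerivative}, membership $u \in C^0_H$ means precisely that the pulled-back curve $t \mapsto \phi_{-t} u(t)$ lies in $C^0([0,T];H_0)$. Hence it suffices to prove that $\phi_{-(\cdot)}u(\cdot)$ is a continuous $H_0$-valued curve, which is a statement entirely on the stationary domain $\Omega_0$.

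Second, given $u \in W(V,V^*)$, Proposition \ref{prop:evolvingSpaceEquivalence} yields $\phi_{-(\cdot)}u(\cdot) \in \mathcal{W}(V_0,V_0^*)$ together with the norm bound \eqref{normEquivalence}. Since $V_0 \subset H_0 \subset V_0^*$ is a genuine Gelfand triple, with $H_0$ as pivot and $V_0^*$ the divergence-free $H^{-1}$ space identified earlier, the classical embedding theorem for Sobolev--Bochner spaces provides the continuous inclusion $\mathcal{W}(V_0,V_0^*) \hookrightarrow C^0([0,T];H_0)$. Applying this to $\phi_{-(\cdot)}u(\cdot)$ produces a continuous $H_0$-valued representative; combining the embedding constant with the lower bound in \eqref{normEquivalence} gives an estimate of the form $\sup_{t} \|\phi_{-t}u(t)\|_{H_0} \le C\|u\|_{W(V,V^*)}$, which is exactly the continuity of the embedding $W(V,V^*) \hookrightarrow C^0_H$ and in particular shows that $t \mapsto u(t)$ is well defined pointwise. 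I would note that this is precisely the mechanism of the abstract \cite[Lemma 2.35]{AlpEllSti15A}, whose only hypotheses are the compatibility of the triple (our Proposition \ref{prop:compatibility}) and the Gelfand structure, both already established; so one may alternatively invoke that lemma verbatim.

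The main point requiring care is the verification that the weak derivative used in the definition of $\mathcal{W}(V_0,V_0^*)$ via \eqref{eq:WeakDerivativeStationarDef}, which tests against all of $H^1_0(\Omega_0;\R^d)$ with the full $L^2(\Omega_0;\R^d)$ pivot, coincides with the abstract weak derivative of the Gelfand triple $V_0 \subset H_0 \subset V_0^*$ so that the classical embedding theorem applies unchanged. This holds because $v'$ is weakly divergence free and $u(t) \in V_0$ is divergence free, so the $L^2(\Omega_0;\R^d)$ and $H_0$ pairings agree against divergence-free test functions, while the gradient part of any $H^1_0$ test function is annihilated. This reconciliation is the only subtlety introduced by working with the non-standard pivot $H_0$ rather than the full $L^2$ space, and once it is checked the embedding is immediate.
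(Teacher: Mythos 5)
Your proposal is correct and follows essentially the same route as the paper: the paper simply invokes \cite[Lemma 2.35]{AlpEllSti15A}, whose mechanism is exactly what you unfold — pull back via Proposition \ref{prop:evolvingSpaceEquivalence}, apply the classical embedding $\mathcal{W}(V_0,V_0^*) \hookrightarrow C^0([0,T];H_0)$ on the reference domain, and transport back using compatibility of $(H,\phi)$. Your additional check that the weak derivative of \eqref{eq:WeakDerivativeStationarDef} restricts to the Gelfand-triple weak derivative is a sensible (and correct) point of care, consistent with the paper's identification of $V_0^*$.
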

This lemma also allows us to define the linear subspace
\begin{equation}\label{eq:HomogeneousMovingSpace}
	W_0(V,V^*) := \{u \in W(V,V^*) : u(0) = 0\},
\end{equation}
which is also a Hilbert space.

Analagously to this section, one may define the space $W(H^1,H^{-1})$ and prove the same results.
Due to the analytical simplicity of using a reduced velocity formulation, this space will not make an apperance until Section \ref{sec:saddle} where we will consider the saddle point formulation.

\section{Well-posedness result}\label{sec:WellPosedness}

Now that the variational evolving space framework has been set up,
following the general setting presented in
\cite[Section 4]{AlpEllSti15A},
we are able to prove well-posedness by a straight forward
application of the
Banach-Ne\v cas-Babu\v ska Theorem.
The operator formulation of the problem we consider may be given as
\begin{equation}\label{eq:AmalProblem}\tag{\textbf{P}}\begin{split}
L \wmdp{u} + Au + \Lambda u &= f  \mbox{ in } L^2_{V^*},
\\
u(0) &= u_0 \in \HO,
\end{split}
\end{equation}
where $L\colon L^2_{H^{-1}} \to L^2_{H^{-1}}$ and $A,\,\Lambda\colon L^2_{H^1} \to L^2_{H^{-1}}$.
We compare \eqref{eq:AmalProblem} and Problem \ref{prob:ZeroDirichlet} to see that for each $t \in (0,T)$ and any $v \in W(V,V^*)$,
\begin{align}
	(\Lambda v)(t):=& \left( D\Phi_t^{-T} \frac{\dee}{\dee t}\left( D\Phi_t^T D \Phi_t \right)D\Phi_t^{-1}\right)\circ \Phi_t^{-1} v (t),\label{eq:LambdaDef}
	\\
	(Av )(t):=& -\Delta v(t) + \left( \left(\Vee (t)-\w(t)\right) \cdot \nabla \right) v(t) - \left( D\Phi_t \frac{\dee}{\dee t}\left( D\Phi_t^{-1} \right)  \right)\circ \Phi_t^{-1} v - (\Lambda v)(t),\label{eq:ADef}
	\\
	L :=& \Id_{L^2_{H^{-1}}}.\label{eq:LDef}
\end{align}
In the above, we have made use of the characterisation of $\mdp{(\cdot)}$ noted in \eqref{eq:DifferenceBetweenMaterialDerivatives} which may be transfered to the weak setting.
Notice that $\Lambda$ has been removed from $A$, this is so that we may keep our notation and calculations as similar to \cite{AlpEll15} as possible.
With these operators in mind, we give the definition of the following bilinear forms.

\begin{definition}\label{def:weakBilinearForms}
	For each $t \in (0,T)$ we define
  the duality pairing
	\begin{align}
		l(t;\cdot,\cdot)
    := \langle \cdot, \cdot \rangle_{H^{-1}, H^1} &\colon H^{-1}(\Omega(t);\R^d) \times H^1_0(\Omega(t);\R^d)\to \R,
	\end{align}
  and the bilinear forms
	\begin{align}
		b(t;\cdot,\cdot) &\colon H^1(\Omega(t);\R^d) \times H^1(\Omega(t);\R^d) \to \R,
		\\
		a(t;\cdot,\cdot) &\colon H^1(\Omega(t);\R^d) \times H^1(\Omega(t);\R^d) \to \R,
	\end{align}
	by
	\begin{align}
		b(t;u,v)&:= \left(u,v\right)_{H^1} - \left(u,v\right)_{L^2}
		+\left(\left(\left(\Vee -\w\right)\cdot \nabla \right) u, v\right)_{L^2}
    &&\forall u,\,v\in H^1(\Omega(t);\R^d)\nonumber
		\\
		a(t;u,v)&:=
		b(t;u,v)
		- \left( \left(D\Phi_t\frac{\dee}{\dee t}\left(D\Phi_t^{-1}\right) \right)\circ \Phi_t^{-1} u, v\right)_{L^2} - \lambda(t;u,v)
    &&\forall u,\,v\in H^1(\Omega(t);\R^d)\nonumber
	\end{align}
	where we also recall the definition for $\lambda(t;\cdot,\cdot)$ as given in \eqref{eq:definitionOfALambda}.
\end{definition}
Notice that $(u,v)_{H^1} - (u,v)_{L^2} = \int_{\Omega(t)} D u : D v$.

We now require some assumptions on $\Vee$ to show appropriate properties of the bilinear forms $a$ and $b$ for the well-posedness of our system.
\begin{assumption}\label{ass:AssumptionsForV}
	Assume that $\Vee$ satisfies
	one of the following:
	\begin{itemize}
	\item
		$\Vee\circ \Phi_{(\cdot)} \in L^\infty(0,T;L^p(\Omega_0 ))$, where $p\geq d$ and $\Div \Vee = 0$ weakly;
	\item
		$\Vee\circ \Phi_{(\cdot)} \in L^\infty(0,T;L^\infty(\Omega_0))$.
	\end{itemize}
\end{assumption}
These conditions are required to show the final point of the following Proposition, which gives the properties on the bilinear forms.
It is worth mentioning that the edge case $\Vee = \w$ satisfies this assumption as does, for $d = 2$, the case $\Vee = u$, where $u$ is the solution to our parabolic problem.
This case may be of interest when considering a moving domain Navier--Stokes problem.

\begin{proposition}\label{prop:propertiesOfBilinearForms}
The bilinear forms $a$, $b$, $\lambda$, and $l$ satisfy the following conditions:
	\begin{enumerate}
		\item The maps
		\begin{align}
			 t \mapsto&\,  b(t;u(t),v(t)) \quad \forall u,v \in L^2_{H^1},
			\\
			 t \mapsto&\,  a(t;u(t),v(t)) \quad \forall u,v \in L^2_{H^1},
			\\
			t \mapsto& \, \lambda(t;u(t),v(t)) \quad \forall u,v \in L^2_{L^2},
			\\
			t \mapsto& \, l(t; u(t), v(t)) \quad \forall (u,v) \in L^2_{H^{-1}}\times L^2_{H^1}
		\end{align}
		are measurable.
		\item The bilinear forms $a$, $b$, $\lambda$, and $l$ are bounded uniformly in $t$.
		\item There is $C_1,\, C_2>0$ such that for any $t\in(0,T)$, $u\in H^1(\Omega(t);\R^d)$,
		\begin{equation}\label{eq:WeakerCoercivityCondition}
			b(t,u,u) \geq C_1 \|u\|_{H^1}^2 - C_2 \|u\|_{L^2}^2.
		\end{equation}
		\item There is $C_1,\, C_2>0$ such that for any $t\in(0,T)$, $u\in H^1(\Omega(t);\R^d)$,
		\begin{equation}\label{eq:WeakerCoercivityCondition2}
			a(t,u,u) \geq C_1 \|u\|_{H^1}^2 - C_2 \|u\|_{L^2}^2.
		\end{equation}
	\end{enumerate}
\end{proposition}

\begin{proof}

	{\it Part 1:}
    Since $t \mapsto u(t), v(t)$ are measurable functions, the measurability of the  bilinear forms $b(t;u(t),v(t)$ and $l(t; u(t), v(t)) $ follows directly from their definitions and properties of measurable functions. Furthermore, the measurability of t $\mapsto \, \lambda(t;u(t),v(t))$, can be proved in an analogue way as presented in \cite[Lemma 2.26]{AlpEllSti15A}, which directly implies the measurability of the $  a(t;u(t),v(t)) $.
	\\{\it Part 2:}
    The boundedness of $l$ follows from inspection.
    For the boundedness of $\lambda$, one appeals to the fact that $\Phi_t$ is regular in time and space.
    In order to see that $b$ and $a$ are bounded, it is enough to prove boundedness of the term $\left((\Vee-\w)\cdot \nabla \right)u \cdot v$.
    For this, it is sufficient to only consider the contribution due to $\Vee$ since we are assuming $\w$ to be regular in space and time.
    The \emph{minimal} assumption in Assumption \ref{ass:AssumptionsForV} is that $\Vee\circ \Phi_{(\cdot)} \in L^\infty(0,T;L^p(\Omega_0))$ for $p\geq d$, therefore
    \[
    	\int_{\Omega(t)}\left(\Vee(t)\cdot \nabla \right)u \cdot v \leq \|\Vee(t)\|_{L^p} \|u\|_{H^1} \|v\|_{L^{p^*})},
    \]
    where $\frac{1}{p}+ \frac{1}{p^*} = \frac{1}{2}$ and $p\geq d$.
    By Sobolev embedding, we see that there is $C>0$ such that
    \[
    	\|v\|_{L^{p^*}} \leq C \|v\|_{H^1}
    \]
    which shows boundedness of $b$ and $a$.
    \\{\it Part 3:}
    For the weaker coercivity condition \eqref{eq:WeakerCoercivityCondition},
    we first note that $\Div w=0$ implies that
    $((w\cdot\nabla)u,u)_{L^2}=0$,
    by an integration by parts argument and $u|_{\partial \Omega(t)}=0$.
    Hence the difficulty is again focused on $\Vee$.
    We consider the two cases of Assumption \ref{ass:AssumptionsForV} separately.
    If $\Div \Vee =0$, the same integration by parts argument shows that
    $(((\Vee-w)\cdot\nabla)u,u)_{H(t)}=0$,
    from which the result follows trivially with $C_1=C_2=1$.
    In the case that $\Div \Vee \neq 0$ we use the bound
    \[
      \left|\int_{\Omega(t)}\left(\Vee(t)\cdot \nabla \right)u \cdot u \right|
        \leq \|\Vee(t)\|_{L^\infty} \|u\|_{H^1} \|u\|_{L^2}
        \leq  \frac{1}{2}\|u\|_{H^1}^2 + \frac{1}{2}\|\Vee(t)\|_{L^\infty}^2\|u\|_{L^2}^2
    \]
    to obtain the result with constants
    \begin{equation}
      C_1 = \frac{1}{2} \text{ and }
      C_2 = 1+ \frac{1}{2}\|\Vee(t)\|_{L^\infty}.
    \end{equation} 
  \\{\it Part 4:}
  Finally, weaker coercivity of $a$ follows from
  weaker coercivity of $b$ and the fact that
  regularity of $\Phi_t$ in space and time provides
  a bound for the difference
  \begin{equation}
    |a(t;u,u)-b(t;u,u)| \leq C \|u\|_{L^2}^2
  \end{equation}
  with some $\Phi$-dependent constant $C>0$.
  
\end{proof}
With the bilinear forms defined and their properties given, we may state the abstract problem we wish to solve.

\begin{problem}\label{prob:preciseWeakFormulationMovingDomain}
	Given $f \in L^2_{V^*}$ $u_0 \in \HO$, find $u \in W(V,V^*)$ such that $u|_{t=0} =u_0$
	\begin{equation}\label{eq:weak_problem}
		\int_0^T l(t;\wmdp{u}(t),v(t)) + a(t;u(t),v(t)) + \lambda(t;u(t),v(t)) \dee t = \langle f,v \rangle_{L^2_{H^{-1}},L^2_{H^1}} \quad \forall v \in L^2_V.
	\end{equation}
\end{problem}
\changed{
\begin{remark}\label{rem:time_dept}
	Notice that in the above formulation, other than the data $f, \Vee$, and $\w$, there is minimial time dependence.
	Assuming appropriate smoothness, one may reformulate \eqref{eq:weak_problem}
	using \eqref{eq:DifferenceBetweenMaterialDerivatives} into
	\begin{equation*}
    \int_0^T l(t;\wmdw{u}(t),v(t)) + b(t;u(t),v(t)) \dee t = \langle {f},v \rangle_{L^2_{H^{-1}},L^2_{H^1}}
    \qquad \forall v \in L^2_V,
	\end{equation*}
	which particularly emphasises the time dependence, or lack thereof.
	This form, however, is theoretically less convienient as it need not hold $\wmdw{u} \in L^2_{V^*}$ in this setting. 
	This particular form may be more useful for a mixed formulation, whereby one may only be interested in $\wmdw{u} \in L^2_{H^{-1}}$.
\end{remark}}
A standard method is to seek this $u$ such that it has decomposition:
\begin{equation}\label{eq:decomposition}
	u(t) = \tilde{u}(t) + \tilde{y}(t),
\end{equation}
where $\tilde{u} \in W_0(V,V^*)$ and $\tilde{y} = \phi_{(\cdot)} y$, for $y\in \mathcal{W}(\VO,\VO^*)$ with $y(0) = u(0)$, which may potentially be chosen as the solution of an appropriate PDE, say a parabolic Stokes equation on $\Omega_0$.
It may be seen that, after relabelling, it sufficient to seek $u \in W_0(V,V^*)$.

\begin{problem}\label{prob:preciseWeakFormulationMovingDomainInitialData0}
	Given $f \in L^2_{V^*}$ $u_0 \in \HO$, find $u \in W_0(V,V^*)$ such that
	\begin{equation}
		\int_0^T l(t;\wmdp{u}(t),v(t)) + a(t;u(t),v(t)) + \lambda(t;u(t),v(t)) \dee t = \langle \tilde{f},v \rangle_{L^2_{H^{-1}},L^2_{H^1}}\quad \forall v \in L^2_V,
	\end{equation}
	where
	\begin{equation}
		\langle \tilde{f},v \rangle_{L^2_{H^{-1}},L^2_{H^1}}
		:=
		\langle f,v \rangle_{L^2_{H^{-1}},L^2_{H^1}} - \int_0^T\left( l\left(t;\wmdp{\tilde{y}}(t),v(t)\right) + a\left(t;\tilde{y}(t),v(t)\right) + \lambda\left(t;\tilde{y}(t) ,v(t)\right)\right)\dee t.
	\end{equation}
\end{problem}
Our well-posedness, as in \cite{AlpEllSti15A}, follows from an application of the Banach-Ne\v{c}as-Babu\v{s}ka theorem.
	The abstract theorem is given as:
\begin{theorem}[Banach-Ne\v{c}as-Babu\v{s}ka Theorem]\label{thm:BNB}
Let $X$ be a Banach space and $Y$ be a reflexive Banach space.
Let
$B:X \times Y \to \R$ a bounded bilinear form
  and $F \in Y^*$.
Then there is a unique $u_F \in X$ such that
\begin{equation}
  \label{eq:BNB_problem}
	B(u_F,v) = F(v) \mbox{ for all } v \in Y
\end{equation}
if and only if
\begin{equation}\label{eq:BNBInf-Sup}
	\exists C >0 : \forall u \in X,~ \sup_{v \in Y} \frac{B(u,v)}{\|v\|_Y}\geq C \|u\|_X,
\end{equation}
\begin{equation}\label{eq:BNBInjectiveDual}
	\forall v \in Y, ~\left( \forall u \in X, ~ B(u,v) = 0\right) \implies v = 0.
\end{equation}
\end{theorem}

A proof of this may be found in \cite{ErnGue04}.
We note that Problem~\ref{prob:preciseWeakFormulationMovingDomainInitialData0} is of the form \eqref{eq:BNB_problem} for the bilinear form
$B:X\times Y \to \R$ defined by
\begin{equation}
	B(u,v):=\int_0^T l(t;\wmdp{u}(t),v(t)) + a(t;u(t),v(t)) + \lambda(t;u(t),v(t))\dee t
\end{equation}
on the spaces
$X= W_0(V,V^*)$ and $Y = L^2_V$
and the right hand side $F=\tilde{f}$.
Notice that showing well-posedness of of the problem
now amounts in summarising that Proposition~\ref{prop:propertiesOfBilinearForms}
guarantees \eqref{eq:BNBInf-Sup} and \eqref{eq:BNBInjectiveDual}
and thus
the applicability of the abstract theorem.

\begin{theorem}\label{thm:ExistsSolutionToOurProblem}
	There is a unique solution to Problem \ref{prob:preciseWeakFormulationMovingDomain}.
\end{theorem}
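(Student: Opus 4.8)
The plan is to apply the Banach--Ne\v{c}as--Babu\v{s}ka theorem (Theorem \ref{thm:BNB}) with trial space $X = W_0(V,V^*)$ and test space $Y = L^2_V$, after first reducing to the case of homogeneous initial data. Concretely, since $\mathcal{W}(V_0,V_0^*)$ embeds continuously into $C([0,T];H_0)$ and the initial trace map $y \mapsto y(0)$ is surjective onto $H_0$ (a standard property of Sobolev--Bochner spaces), I would pick $y \in \mathcal{W}(V_0,V_0^*)$ with $y(0) = u_0$ and set $\tilde y := \phi_{(\cdot)} y$. By the evolving space equivalence of Proposition \ref{prop:evolvingSpaceEquivalence}, $\tilde y \in W(V,V^*)$, and $\tilde y(0) = u_0$ because $\phi_0 = \Id$. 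Writing $u = \tilde u + \tilde y$ as in \eqref{eq:decomposition} with $\tilde u \in W_0(V,V^*)$ converts Problem \ref{prob:preciseWeakFormulationMovingDomain} into the equivalent Problem \ref{prob:preciseWeakFormulationMovingDomainInitialData0} with modified data $\tilde f \in L^2_{V^*}$; it therefore suffices to produce a unique $\tilde u \in W_0(V,V^*)$ solving the homogeneous-data problem.

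To set up Theorem \ref{thm:BNB}, define
\[
B(u,v) := \int_0^T l(t;\wmdp u(t),v(t)) + a(t;u(t),v(t)) + \lambda(t;u(t),v(t)) \, \dee t,
\qquad
F(v) := \langle \tilde f, v \rangle_{L^2_{V^*},L^2_V}.
\]
The space $X = W_0(V,V^*)$ is a Hilbert space, hence Banach, and $Y = L^2_V$ is a Hilbert space, hence reflexive; moreover $F \in Y^*$ since $\tilde f \in L^2_{V^*} \cong (L^2_V)^*$. The one hypothesis of Theorem \ref{thm:BNB} that requires genuine verification is that $B$ is a bounded bilinear form on $X \times Y$: the $a$ and $\lambda$ contributions are controlled by $\|u\|_{L^2_V}\|v\|_{L^2_V}$ via the uniform boundedness established in Proposition \ref{prop:propertiesOfBilinearForms}, while the material-derivative term is estimated by $|l(t;\wmdp u(t),v(t))| \le \|\wmdp u(t)\|_{V^*(t)}\|v(t)\|_{V(t)}$ followed by Cauchy--Schwarz in time, yielding $|B(u,v)| \le C\|u\|_{W(V,V^*)}\|v\|_{L^2_V}$.

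The two structural conditions of Theorem \ref{thm:BNB} are then exactly the two preceding propositions. The inf--sup bound \eqref{eq:BNBInf-Sup} follows from Proposition \ref{prop:EstimateForBNB}: for each $v \in W_0(V,V^*)$ there is $\eta_v \in L^2_V$ with $B(v,\eta_v) \ge C\|v\|_{W(V,V^*)}\|\eta_v\|_{L^2_V}$, so dividing by $\|\eta_v\|_{L^2_V}$ gives $\sup_{\eta \in L^2_V} B(v,\eta)/\|\eta\|_{L^2_V} \ge C\|v\|_{W(V,V^*)}$. The nondegeneracy condition \eqref{eq:BNBInjectiveDual} is precisely the statement of Proposition \ref{prop:DualProblemInjective}. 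Theorem \ref{thm:BNB} thus provides a unique $\tilde u \in W_0(V,V^*)$ with $B(\tilde u, v) = F(v)$ for all $v \in L^2_V$, i.e. a unique solution of Problem \ref{prob:preciseWeakFormulationMovingDomainInitialData0}. Unwinding the reduction, $u := \tilde u + \tilde y$ solves Problem \ref{prob:preciseWeakFormulationMovingDomain} with $u(0) = u_0$, the initial value being well defined by Lemma \ref{lem:embedsIntoContinuousSpace}. Uniqueness transfers back, since the difference of any two solutions of Problem \ref{prob:preciseWeakFormulationMovingDomain} lies in $W_0(V,V^*)$ and solves the homogeneous problem, hence vanishes by the uniqueness just obtained.

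Since the analytic weight is carried entirely by Propositions \ref{prop:EstimateForBNB} and \ref{prop:DualProblemInjective}, the main obstacle here is not the BNB application itself but the bookkeeping around the reduction: ensuring the lift $\tilde y$ genuinely exists in $W(V,V^*)$ with the correct trace and that Problems \ref{prob:preciseWeakFormulationMovingDomain} and \ref{prob:preciseWeakFormulationMovingDomainInitialData0} are truly equivalent, together with the boundedness of $B$ on $X \times Y$ (in particular handling the $l$ term through the $W(V,V^*)$ norm). These are the points I would write out in full.
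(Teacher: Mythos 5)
Your proposal is correct and follows essentially the same route as the paper: the reduction to Problem \ref{prob:preciseWeakFormulationMovingDomainInitialData0} via the decomposition \eqref{eq:decomposition}, and then the Banach--Ne\v{c}as--Babu\v{s}ka theorem with $X = W_0(V,V^*)$, $Y = L^2_V$, with the inf--sup condition supplied by Proposition \ref{prop:EstimateForBNB} and the nondegeneracy by Proposition \ref{prop:DualProblemInjective}. The extra details you flag (existence of the lift $\tilde y$ with the correct trace, boundedness of $B$, and the transfer of uniqueness back to the original problem) are sound and merely make explicit what the paper leaves implicit.
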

\begin{proof}
  We apply Theorem \ref{thm:BNB} to Problem \ref{prob:preciseWeakFormulationMovingDomainInitialData0}
  with $X$, $Y$, $B:X\times Y \to \R$, and $F$ as given above.
  To this end it is sufficient to show the inf-sup-type condition \eqref{eq:BNBInf-Sup},
  the dual injectivety condition \eqref{eq:BNBInjectiveDual},
  and $F \in Y^*$.

  Thanks to the properties of the bilinear forms shown in
  Proposition~\ref{prop:propertiesOfBilinearForms}, conditions
  \eqref{eq:BNBInf-Sup} and \eqref{eq:BNBInjectiveDual}
  follow directly from Lemmas 4.3 and 4.4 in \cite{AlpEllSti15A},
  respectively, where a slightly different notation (operators instead
  of bilinear forms) is used.
  
  Finally, one may verify that $\tilde{f}\in L^2_{V^*}$ since $\tilde{y}$, which appears in \eqref{eq:decomposition} satisfies $\tilde{y}\in W(V,V^*)$.
  This gives us the $\tilde{u} \in W_0(V,V^*)$, thus by \eqref{eq:decomposition}, we recover $u \in W(V,V^*)$ with $u(0) = u_0$.
\end{proof}
This has given the existence of a weak solution to the parabolic Oseen equation in a moving domain in a reduced velocity formulation.

\subsection{Recovering a solution to the saddle point formulation}\label{sec:saddle}
In the above analysis, we considered a reduced velocity formulation of a parabolic Oseen equation.
It is then natural to ask if there is a solution to the formulation with pressure.
For convenience, let us define the pressure space
\begin{equation}
	L^2_{L^2_0} := \{ q\colon [0,T] \to \bigcup_{t \in [0,T]} L^2(\Omega(t)) \times \{t\} : q(\cdot) \circ \Phi_{-(\cdot)} \in L^2((0,T); L^2(\Omega_0)), \int_{\Omega(\cdot)}q(\cdot) = 0\ a.e. \}.
\end{equation}
The answer to this question is given in the following corollary to Theorem \ref{thm:ExistsSolutionToOurProblem}.
\begin{corollary}\label{cor:saddle}
	Let $u_0 \in \HO$ and $f \in L^2_{V^*}$ and let $L^2_0(\Omega_0)$.
    Then there exist unique $(u,p)$ such that $u \in W(H^1,H^{-1})$, $p \in L^2_{L^2_0}$, $u|_{t=0} = u_0$, and
	\begin{align}
		\label{eq:SaddlePoint1}
		\int_0^T l(t;\wmdp{u}(t),v(t)) + a(t;u(t),v(t)) + \lambda(t;u(t),v(t)) + \int_{\Omega(t)} p(t) \Div v(t)  \dee t &= \langle f,v \rangle_{L^2_{H^{-1}},L^2_{H^1}},
		\\
		\label{eq:SaddlePoint2}
		\int_0^T \int_{\Omega(t)} q(t) \Div u(t) &= 0
	\end{align}
	for all $v \in L^2_{H^1_0}$ and for all $q$ such that $q \circ \Phi_{(\cdot)} \in L^2( 0,T; L^2_0(\Omega_0))$.
    In particular, $u$ is a solution to Problem \ref{prob:preciseWeakFormulationMovingDomain}.
\end{corollary}
The proof of the result follows almost identically to the proof of Theorem 5.1 in \cite{VouReu18}, where one must make appropriate changes for the fact our domain moves.
An outline of the proof relies on taking the solution $u \in W(V,V^*)$ to Problem \ref{prob:preciseWeakFormulationMovingDomain} and grouping up all the terms involving $u$ from \eqref{eq:SaddlePoint1} as the linear operator $\ell_u \in (L^2_{L^2_0})^*$ which satisfies
\begin{equation}\label{eq:toApplydeRham}
	\int_0^T \int_{\Omega(t)} p(t) \Div v(t) = \ell_u(v) \quad \forall v \in L^2_{H^1_0}
\end{equation}
and we note that $\ell_u|_{L^2_V} = 0$.
From this, one uses an adaptation of de Rham's theorem to invert the linear operator in \eqref{eq:toApplydeRham}.
We refer the reader to Corollary 2.4 of \cite{GroReu11} for a precise statement of de Rham's theorem which roughly states that $\nabla \colon L^2_0(\Omega(t))\to \{ g \in H^{-1}(\Omega(t)) : g|_{V(t)} =0 \}$ is an isomorphism for each $t \in (0,T)$.

\section{Discretisation in time}\label{sec:TimeDiscretisation}

In this section we wish to give an analysis of a time discrete system related to Problem \ref{prob:NonZeroDirichlet}.
The discretisation of evolving space PDEs has been studied extensively in \cite{EllRan20}, the study of a fully discrete system for a heat equation was considered in \cite{DziEll12} and for linear parabolic equations in \cite{LubManVen13} the full discretisation is also considered.

We neglect the full discretisation and focus on the time discretisation.
This choice is made as the significant difference in this article to previous articles considering evolving Bochner spaces is the construction of the time derivative.
Of course one might consider the spacial discretisation to obtain a full discretisation, however we comment that this would not prove interesting or original since standard results can be applied.
Other methods are certainly of interest, making use of unfitted meshes \cite{BurFreMas19,WahRicLeh21}.

For our system, we suggest the following time update step.
For $n \geq 0$, let $t_{n} = n \tau$ for some $\tau>0$.
Given $u^n \in V(t_n)$, find $u^{n+1} \in V(t_{n+1})$
such that
\begin{equation}\label{eq:DiscreteEquation}\begin{split}
	\int_{\Omega(t_{n+1})} u^{n+1} \cdot \eta + \tau \nabla u_{n+1} : \nabla \eta &+ \tau\left((\Vee-\w)(t_{n+1})\cdot \nabla\right) u^{n+1} \cdot \eta
	\\&=
	\int_{\Omega(t_n)} u^n \cdot \left( \eta \circ \Phi_{t_{n+1}} \circ \Phi_{t_n}^{-1}\right) + \tau \int_{\Omega(t_{n+1})}f \cdot \eta
	\end{split}
\end{equation}
for all $\eta \in V(t_{n+1})$.
\begin{remark}
	Despite the inconvenient terms involving $D\Phi_t$ and derivatives which appear in the weak problem, Problem \ref{prob:preciseWeakFormulationMovingDomain}, we see that this discretisation has the form of a '\emph{standard}' moving domain discretisation.
	That is to say the time discretisation we present appears to be the time discretisation one might propose from looking at \eqref{eq:ParabolicOseen}.
  Let us note that the discretisation we provide is first order.
  Higher order discretisations are certainly possible, these could use higher order finite difference schemes, or even Discontinuous Galerkin strategies.
  \changed{However such schemes may or may not require the inclusion of the aforementioned inconvenient terms involving $D \Phi_t$.}
\end{remark}

We now provide a justification for this proposed discretisation.
\subsection{Derivation of discretisation}
We recall the explicit form of Problem \ref{prob:preciseWeakFormulationMovingDomain} for sufficiently smooth data $f$,
\[
	\int_0^T \int_{\Omega(t)} \wmdp u \cdot \eta - \left( D\Phi_t (D\Phi_t^{-1})'\right)\circ \Phi_t^{-1} u \cdot \eta + \nabla u : \nabla \eta + \left((\Vee-\w)\cdot\nabla \right)u \cdot \eta = \int_0^T\int_{\Omega(t)} f \eta
\]
for any $\eta \in L^2_V$.
Let us assume that $u$ is sufficiently smooth so that $\wmdp u = \mdp{u}$.
Using the transport formula from Theorem~\ref{thm:transportFormula} and rewriting $\lambda$ using the chain rule yields
\[
	\frac{\dee }{\dee t}\int_{\Omega(t)} u \cdot \eta
	=
	\int_{\Omega(t)} \mdp {u} \cdot \eta + u \cdot \mdp{\eta} - \left( D\Phi_t (D\Phi_t^{-1})' \right) \circ \Phi_t^{-1} u \cdot \eta - u \cdot \left( D\Phi(D\Phi_t^{-1})' \right) \circ \Phi_t^{-1} \eta.
\]
Choosing $\eta$ to satisfy $\mdp{\eta} = 0$, one arrives at
\begin{equation}\label{eq:EquationWhichWeDiscretise}
	\frac{d}{dt}\int_{\Omega(t)} u\cdot \eta
	+
	\int_{\Omega(t)} u \cdot \left( D\Phi_t (D\Phi^{-1}_t)' \right) \circ \Phi_t^{-1} \eta + \nabla u : \nabla \eta + \left((\Vee-\w)\cdot\nabla \right) u \cdot \eta = \int_{\Omega(t)} f\cdot\eta
\end{equation}
for almost every $t \in (0,T)$.
We will now approximate the time derivatives which appear in \eqref{eq:EquationWhichWeDiscretise}.
It is standard to approximate the time derivative of the inner product as:
\begin{equation}\label{eq:ApproxDerivativeOfInnerProduct}
	\frac{d}{dt}\int_{\Omega(t)} u \cdot \eta
	\approx
	\frac{1}{\tau} \left( \int_{\Omega(t_{n+1})} u(t_{n+1}) \cdot \eta(t_{n+1}) - \int_{\Omega(t_n)} u(t_n)\cdot \eta(t_n)\right).
\end{equation}
Notice that in \eqref{eq:EquationWhichWeDiscretise} we have the time derivative of $D\Phi_t^{-1}$.
This term could be included as is, providing a different discretisation to \eqref{eq:DiscreteEquation}, however if one were to work with an unknown $\w$, hence unknown $\Phi$, it may not be convenient to directly use $(D\Phi_t^{-1})'$, so discretisation may be appropriate.
Here, we use a first order approximation $D\Phi_t (D\Phi_t^{-1})' = -(D\Phi_t)' D\Phi_t^{-1} \approx -\frac{1}{\tau} (D\Phi_{t_{n+1}} D\Phi_{t_n}^{-1} - I)$,
therefore
\begin{equation}\label{eq:ApproxHorribleTerm}
	\int_{\Omega(t)} u \cdot \left( D\Phi_t (D\Phi_t^{-1})' \right) \circ \Phi_t^{-1} \eta
	\approx
	\frac{1}{\tau}\int_{\Omega(t_n)} u(t_n) \cdot \eta(t_n) - u(t_n) \cdot (D\Phi_{t_{n+1}} D\Phi_{t_n}^{-1}) \circ \Phi_{t_n}^{-1} \eta(t_n),
\end{equation}
where the first term of this will cancel with the second term from the time derivative of the integral.

It is also convenient to calculate what $\eta(t_n)$ is in terms of $\eta(t_{n+1})$ under the relationship that $\eta(t) = \phi_t\eta_0$ for some given $\eta_0 \in V(0)$,
\begin{equation}\label{eq:whatIsEtaTn}
	\eta(t_n)
	=
	\phi_{t_n} \phi_{-t_{n+1}} \eta(t_{n+1})
	=
	\left( D\Phi_{t_n} D\Phi_{t_{n+1}}^{-1} \eta(t_{n+1}) \circ \Phi_{t_{n+1}}\right)\circ \Phi_{t_n}^{-1}.
\end{equation}
This results in
\begin{equation}
	(D\Phi_{t_{n+1}} D\Phi_{t_n}^{-1}) \circ \Phi_{t_n}^{-1} \eta(t_n)
	=
	\eta(t_{n+1}) \circ \Phi_{t_{n+1}} \circ \Phi_{t_n}^{-1}.
\end{equation}

As such, when using \eqref{eq:ApproxDerivativeOfInnerProduct}, \eqref{eq:ApproxHorribleTerm} and \eqref{eq:whatIsEtaTn} in \eqref{eq:EquationWhichWeDiscretise} and approximating the other terms implicitly, we arrive at the discretisation in \eqref{eq:DiscreteEquation}.

\subsubsection{Existence and uniqueness of discrete solution}
It is of course necessary to demonstrate that for appropriate data, there is a solution to the discrete system.
It may be seen that this is the case, as the strong form of the problem posed in \eqref{eq:DiscreteEquation}  is given by:
\begin{align}
	-\Delta u^{n+1} + ((\Vee - \w)(t_{n+1})\cdot \nabla) u^{n+1} + \frac{1}{\tau} u^{n+1} + \nabla  p^{n+1} &= \frac{u^n\circ \Phi_{t_n}\circ \Phi_{t_{n+1}}^{-1}} {\tau}+f(t_{n+1}) \quad \mbox{ in } \Omega(t_{n+1})
	\label{eq:discreteSaddle}
	\\
	\Div u^{n+1} &= 0 \quad \mbox{ in } \Omega(t_{n+1}),
	\\
	u^{n+1}|_{\partial \Omega(t_{n+1})} &= 0.
\end{align}
From this form, with suitable assumptions on $\Vee$ which will be given in Assumption \ref{ass:AssumptionForDiscrete}, it follows that there exists a unique solution $(u^{n+1},p^{n+1})$ to the above system.

\subsubsection{Interpolation of discrete solutions}
For $n\geq 0$, for $t \in (t_n,t_{n+1})$, if one is interested in the interpolation between $u^n \in V(t_n)$ and $u^{n+1} \in V(t_{n+1})$, we note that it is not necessarily appropriate to consider the standard interpolation given by $\frac{1}{t_{n+1}-t_n}\left(u^{n+1}\circ \Phi_{t_{n+1}} (t - t_{n})  +u^n \circ \Phi_{t_n} (t_{n+1} -t)  \right)\circ \Phi_{-t}
\not \in V(t)$.
Instead, one should consider $\frac{1}{t_{n+1}-t_n}\phi_{t} \left( \phi_{-t_{n+1}}u^{n+1} (t-t_n) + \phi_{-t_n} u^n (t_{n+1}-t) \right) \in V(t)$.

\subsection{Proof of convergence of discretisation}

We now prove, under certain regularity assumptions that the above discretisation will converge as $\tau \to 0$.
Due to the moving space framework, this essentially boils down to calculating the error for an ODE.

Recall that $\mdp{u} - (D\Phi_t(D\Phi_t^{-1})')\circ \Phi_t^{-1}u  = \wmdw u$.
We will use this to make the calculations somewhat shorter.
It is convenient to use the following alternate formulation of the continuous equation:
\begin{equation}\label{eq:ContinuousEquationAlternateFormPointwise}
 \int_{\Omega(t)}\wmdw u \cdot \eta + \nabla u : \nabla \eta + \left((\Vee-\w)\cdot \nabla \right)u \cdot \eta = \int_{\Omega(t)}f \cdot \eta
\end{equation}
for $\eta \in V(t)$ for a.e. $t \in (0,T)$.

\begin{assumption}\label{ass:AssumptionForDiscrete}
	We assume that $f$ is continuous in time and $L^2$ in space;
	$\Vee$ satisfies either $\Vee \circ \Phi_{(\cdot)} \in C(0,T; L^p(\Omega_0))$ with $\Div \Vee =0$ for $p\geq d$ or $\Vee \circ \Phi_{(\cdot)} \in C(0,T;L^\infty(\Omega_0))$;
	$u$, the solution to Problem \ref{prob:preciseWeakFormulationMovingDomainInitialData0} satisfies $u \in C^2_H$;
	Equation \eqref{eq:ContinuousEquationAlternateFormPointwise} holds for every $t \in (0,T)$.
\end{assumption}
We note that the assumptions made on $f$ and $\Vee$ are not expected to be sufficient to ensure the assumptions required on the solution $u$.

Furthermore we assume that the solution is sufficiently smooth to guarantee
consistency with order $r>0$ of the backward Euler method
in the sense that its pullback by $\hat{u}(t) =  u(t) \circ \Phi_t$ satisfies
\begin{equation}\label{eq:assumedError}
  \Bigl\|\hat{u}(t+\tau) - \hat{u}(t) - \tau \frac{\dee \hat{u}}{\dee t}(t+\tau)\Bigr\|_{L^2(\Omega(0))} \leq C(u) \tau^{r+1}.
\end{equation}
This can e.g.\ be shown with order $r=1$ if $\hat{u}$ is $C^2(0,T;L^\infty(\Omega(0)))$.
For later reference we note that
the time derivative of the pullback is given by the pullback of the material derivative,
that is
\begin{equation}
  \frac{\dee}{\dee t} \hat{u}(t)
  = \Bigl(\frac{\dee }{\dee t} u(t) + \nabla u(t) \cdot \w(t)\Bigr)\circ \Phi_t
  = \wmdw u(t) \circ \Phi_t.
\end{equation}
Thus the assumed consistency estimate can be written as
\begin{equation}
  \Bigl\|u(t_{n+1})
  -u(t_{n}) \circ \Phi_{t_{n}} \circ \Phi_{t_{n+1}}^{-1}
  - \tau\wmdw u(t_{n+1})\Bigr\|_{L^2(\Omega(t_{n+1}))}
  \leq C(u) \tau^{r+1},
\end{equation}
where we have again utilised that $\det(D\Phi_t) \equiv 1$.

We will also assume that $\tau$ is sufficiently small.

\begin{theorem}
  Let $u \in W(V,V^*)$ be the solution to Problem \ref{prob:preciseWeakFormulationMovingDomain}, let $\{u^n\}_{n\geq 1}$ be the solutions to the discrete system \eqref{eq:DiscreteEquation} with the same initial data.
  Under the assumption that \eqref{eq:assumedError} holds with $r\in (0,1]$ and the conditions of Assumption \ref{ass:AssumptionForDiscrete} it holds that there is $C>0$ independent of $\tau$ such that for each $n \geq 0$ with $\tau n < T$,
  \[
    \|u^{n+1}-u(t_{n+1})\|_{L^2(\Omega(t_{n+1}))}^2 + \tau \|\nabla \left( u^{n+1}-u(t_{n+1}) \right) \|_{L^2(\Omega(t_{n+1}))}^2
    \leq C \tau^{2 r}.
  \]
\end{theorem}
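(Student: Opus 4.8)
The plan is to derive a one-step error recursion and then close it with a discrete Gronwall argument. Writing $e^n := u^n - u(t_n) \in V(t_n)$, I would work with the combined error quantity $I$ introduced in \eqref{eq:Quantitity to estimate}, which pairs the squared $L^2$ errors at $t_{n+1}$ and $t_n$ with $\alpha\tau$ times the squared gradient error at $t_{n+1}$. The coefficient $\alpha$ is precisely the constant supplied by the discrete weak coercivity \eqref{eq:WeakCoercivityDiscretisation}, which itself follows from the hypotheses on $\Vee$ in Assumption \ref{ass:AssumptionForDiscrete}. Bounding $I$ from above therefore controls simultaneously both norms the theorem asserts, so the whole estimate reduces to estimating $I$.

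The core of the argument is to test both the discrete scheme and the continuous equation against the same error function $\eta = u^{n+1} - u(t_{n+1}) \in V(t_{n+1})$. First I would expand $I$ via \eqref{eq:WeakCoercivityDiscretisation} and insert the discrete identity \eqref{eq:DiscreteEquation} to eliminate the terms carrying $u^{n+1}$; this brings in the transported history $u^n\circ\Phi_{t_{n+1}}\circ\Phi_{t_n}^{-1}$ and the data $f^{n+1}$. Next I would use the pointwise continuous equation \eqref{eq:ContinuousEquationAlternateFormPointwise}, valid for every $t$ by assumption, again tested with $\eta$, to replace the viscous and advective contributions acting on $u(t_{n+1})$ by $\wmdw u(t_{n+1})$ and $f^{n+1}$. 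After the forcing and the elliptic and advective terms cancel, $I$ collapses to a transported cross term in $u^n$ and $e^n$, a pairing of $u(t_{n+1}) - \tau\wmdw u(t_{n+1})$ against $\eta$, the negative square $-\|e^n\|^2_{L^2(\Omega(t_n))}$, and the coercivity remainder $\tau\beta\|\eta\|^2_{L^2(\Omega(t_{n+1}))}$.

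The main obstacle, and the step fixing the $\tau^3$ rate, is the consistency estimate
\[
	\| u(t_{n+1}) - \tau \wmdw u(t_{n+1}) - u(t_n)\circ\Phi_{t_n}\circ\Phi_{t_{n+1}}^{-1}\|_{L^2(\Omega(t_{n+1}))} \leq C_{der}\tau^2.
\]
I would establish this by pulling back along $\Phi_{t_{n+1}}$, so that the flow-map compositions vanish and the quantity becomes the second-order Taylor remainder of $s\mapsto u(s)\circ\Phi_s$ expanded at $s=t_{n+1}$; this remainder is governed by $\wmdw\wmdw u$, whose boundedness is exactly what the hypothesis $u\in C^2_H$ in Assumption \ref{ass:AssumptionForDiscrete} provides. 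This is where the physical material derivative $\wmdw$, rather than $\wmdp$, is the natural object, which is why the identity $\wmdp u - (D\Phi_t(D\Phi_t^{-1})')\circ\Phi_t^{-1}u = \wmdw u$ is used to recast the scheme beforehand. The determinant identity $\det(D\Phi_t)\equiv 1$ from Lemma \ref{lem:propertiesOfDomainAndDet} then guarantees that the transported cross term obeys the Cauchy--Schwarz bound $\|e^n\|_{L^2(\Omega(t_n))}\|\eta\|_{L^2(\Omega(t_{n+1}))}$ with no leftover Jacobian factors.

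Finally I would apply Young's inequality to the two remaining cross terms, splitting off $\tfrac{\epsilon_1}{2}\|\eta\|^2$ from the transported term and $\tfrac{\tau\epsilon_2}{2}\|\eta\|^2$ from the consistency pairing while absorbing the latter error as $C_{der}^2\tau^3/(2\epsilon_2)$, so as to reach the recursion \eqref{eq:DiscretePenultimateEstimate} after the choices $\epsilon_1=1$, $\epsilon_2=2$. A discrete Gronwall inequality, started from $e^0=0$ thanks to the matching initial data, then yields $\|e^{n+1}\|^2_{L^2(\Omega(t_{n+1}))}\leq C\tau^3$ as in \eqref{eq:GronwallArgument}. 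Rearranging \eqref{eq:DiscretePenultimateEstimate} to isolate $\alpha\tau\|\nabla e^{n+1}\|^2_{L^2(\Omega(t_{n+1}))}$, discarding the favourably signed $\|e^{n+1}\|^2$ contribution and bounding $\|e^n\|^2$ by the Gronwall estimate, produces $\tau\|\nabla e^{n+1}\|^2_{L^2(\Omega(t_{n+1}))}\leq C\tau^3$, i.e. the gradient bound at order $\tau^2$, which completes the asserted estimate.
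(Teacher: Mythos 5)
Your proposal is correct and follows essentially the same route as the paper's own argument: the same error functional $I$ from \eqref{eq:Quantitity to estimate}, the same testing of \eqref{eq:DiscreteEquation} and the pointwise continuous equation \eqref{eq:ContinuousEquationAlternateFormPointwise} with $u^{n+1}-u(t_{n+1})$, the same $O(\tau^2)$ consistency estimate via the Taylor expansion of $s\mapsto u(s)\circ\Phi_s$ controlled by $\wmdw\wmdw u$ under $u\in C^2_H$, the same use of $\det(D\Phi_t)=1$ for the transported cross term, and the same Young/Gronwall closure with $\epsilon_1=1$, $\epsilon_2=2$ leading to \eqref{eq:DiscretePenultimateEstimate} and \eqref{eq:GronwallArgument}. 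No gaps.
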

Before we prove this statement, let us note that this result follows exactly the same argumentation as if one were using a discretisation based on the saddle point formulation, rather than the reduced velocity formulation provided.
This is due to the lack of spatial discretisation, whereby in full discretisations, one may not have exactly divergence free velocity fields.

\begin{proof}

  We aim to estimate the quantities $\|u^{n+1} - u(t_{n+1})\|_{L^2}$ and $\|\nabla (u^{n+1} - u(t_{n+1}))\|_{L^2}$.
  For convenience, let us write $e_{n}:= u^{n} - u(t_{n})$ and $\|\cdot\|_n = \|\cdot\|_{L^2(\Omega(t_{n}) )}$.
  In the following we will make use of the fact that
  $\|v\|_n = \|v\circ \Phi_{t_n}\|_{L^2(\Omega_0)} = \|v\circ \Phi_{t_n} \circ \Phi_{t_{n+1}}^{-1}\|_{n+1}$,
  which holds because $\det(D\Phi_t) =1 $, which is shown in Lemma \ref{lem:propertiesOfDomainAndDet}.
  
  By argumentation similar to that which appears in Proposition \ref{prop:propertiesOfBilinearForms}, we have that there are constants $\alpha,\, \beta >0$ such that
\begin{equation}\label{eq:WeakCoercivityDiscretisation}
	\alpha \int_{\Omega(t_{n+1})} |\nabla\eta|^2
	\leq
	\int_{\Omega(t_{n+1})} \left( |\nabla\eta|^2
	+
	\left( (\Vee-\w)(t_{n+1})\cdot \nabla\right)\eta  \cdot  \eta	\right)
	+ \beta
	\int_{\Omega(t_{n+1})} \eta^2
\end{equation}
for all $\eta \in H^1(\Omega(t_{n+1});\R^d)$.
The estimate \eqref{eq:WeakCoercivityDiscretisation} follows from the assumptions on $\Vee$ made in Assumption \ref{ass:AssumptionForDiscrete}.
  
  In order to estimate
  $\| e_{n+1} \|_{n+1}$ and $\| \nabla e_{n+1} \|_{n+1}$,
  it is convenient to define $I$ as
  \begin{equation}\label{eq:Quantitity to estimate}
    I:=
    \| e_{n+1} \|_{n+1}^2 + \alpha \tau \| \nabla e_{n+1} \|_{n+1}^2
    =
    \int_{\Omega(t_{n+1})} e_{n+1}^2
    +
    \alpha \tau \int_{\Omega(t_{n+1})} |\nabla e_{n+1}|^2.
  \end{equation}

We use this weaker coercivity \eqref{eq:WeakCoercivityDiscretisation} with $\eta = e_{n+1}$ to estimate $I$, we also rearrange the products into a form which will be convenient
\begin{equation}\begin{split}
	I
	\leq&
	\int_{\Omega(t_{n+1})} u^{n+1} \cdot e_{n+1}
	+
	\tau \int_{\Omega(t_{n+1}) }\nabla u^{n+1} : \nabla e_{n+1}
	+
	\tau \int_{\Omega(t_{n+1}) } \left((\Vee-\w)(t_{n+1}) \cdot \nabla \right) u^{n+1} \cdot e_{n+1}
	\\
	&- \int_{\Omega(t_{n+1})} u(t_{n+1}) \cdot e_{n+1}
	-
	\tau \int_{\Omega(t_{n+1}) }\nabla u(t_{n+1}) : \nabla e_{n+1}
	\\
	&-
	\tau \int_{\Omega(t_{n+1}) } \left((\Vee-\w)(t_{n+1}) \cdot \nabla \right) u(t_{n+1}) \cdot e_{n+1}
	+\tau \beta \int_{\Omega(t_{n+1}) } e_{n+1}^2.
\end{split}\end{equation}
We use the discrete equation \eqref{eq:DiscreteEquation} with test function $e_{n+1}\in V(t_{n+1})$ to see that
\begin{equation}\begin{split}
	I
	\leq&
	\int_{\Omega(t_{n})} u^{n} \cdot e_{n+1} \circ \Phi_{t_{n+1}}\circ \Phi_{t_n}^{-1}
	+ \tau \int_{\Omega(t_{n+1})} f^{n+1} \cdot e_{n+1}
	- \int_{\Omega(t_{n+1})} u(t_{n+1}) \cdot e_{n+1}
	\\
	&-
	\tau \int_{\Omega(t_{n+1}) }\nabla u(t_{n+1}) : \nabla e_{n+1}
	-
	\tau \int_{\Omega(t_{n+1}) } \left((\Vee-\w)(t_{n+1}) \cdot \nabla \right) u(t_{n+1}) \cdot e_{n+1}
	+\tau \beta \int_{\Omega(t_{n+1}) } e_{n+1}^2.
\end{split}\end{equation}

Using the assumption that the continuous equation \eqref{eq:ContinuousEquationAlternateFormPointwise} holds for every $t\in (0,T)$, we test with $e_{n+1}\in V(t_{n+1})$ to see that
\begin{equation}\begin{split}
	I
	\leq&
	\int_{\Omega(t_{n})} u^{n} \cdot e_{n+1}\circ \Phi_{t_{n+1}}\circ \Phi_{t_n}^{-1}
	+ \tau \int_{\Omega(t_{n+1})} f^{n+1} \cdot e_{n+1}
	- \int_{\Omega(t_{n+1})} u(t_{n+1}) \cdot e_{n+1}
	\\
	&+
	\tau \int_{\Omega(t_{n+1}) }\wmdw u (t_{n+1}) \cdot e_{n+1}
	-
	\tau \int_{\Omega(t_{n+1}) } f^{n+1}\cdot e_{n+1}
	+\tau \beta \int_{\Omega(t_{n+1}) } e_{n+1}^2.
\end{split}\end{equation}
We now collect terms to get
\begin{equation}
	I
	\leq
	\int_{\Omega(t_{n})} u^{n} \cdot e_{n+1} \circ \Phi_{t_{n+1}}\circ \Phi_{t_n}^{-1}
	- \int_{\Omega(t_{n+1})} ( u(t_{n+1})- \tau \wmdw u (t_{n+1}))  \cdot e_{n+1}
	+\tau \beta \int_{\Omega(t_{n+1}) } e_{n+1}^ 2.
\end{equation}

Adding and subtracting the term
$\int_{\Omega_0} (u(t_n) \circ \Phi_{t_n}) \cdot (e_{n+1} \circ \Phi_{t_{n+1}})$
and using the Cauchy--Schwarz inequality and the consistency error bound we get
\begin{equation}
  I \leq
    \|e_n\|_n \|e_{n+1}\|_{n+1}
    + C(u) \tau^{r+1}  \|e_{n+1}\|_{n+1}
    + \tau\beta \|e_{n+1}\|_{n+1}^2.
\end{equation}

From now on we assume that $\tau$ is sufficiently small, such that
$0<\gamma_0^{-1} \leq 1-\tau\beta$ holds true for some fixed $\gamma_0>0$,
independent of $\tau$
and denote $\gamma = (1-\tau\beta)^{-1}\leq \gamma_0$ for convenience.
Then, by subtracting the last term from the previous estimate and inserting $I$
we arrive at
\begin{equation}
  \label{eq:bounded_error}
  (1-\tau\beta)\|e_{n+1}\|_{n+1}^2 + \tau\alpha\|\nabla e_{n+1}\|_{n+1}^2
  \leq (\|e_n\|_n + C(u) \tau^{r+1})  \|e_{n+1}\|_{n+1}
\end{equation}
and by dropping the $\nabla e_{n+1}$ term and dividing by $(1-\tau\beta)\|e_{n+1}\|_{n+1}$ at
\begin{equation}
  \|e_{n+1}\|_{n+1}
  \leq \gamma \|e_n\|_n + \gamma C(u) \tau^{r+1}.
\end{equation}
Using this estimate recursively together with $\|e_0\|_0=0$ yields
\begin{align}
  \begin{split}
  \label{eq:disc_error_l2_bound}
  \|e_n\|_n
  &\leq
    \tau^{r+1}C(u)
    \sum_{k=1}^n \gamma^k
  \leq
    \tau^r C(u)
    (n\tau)
    \exp((n\tau)\beta\gamma)
  =
    \tau^r C(u) t_n \exp(t_n\beta\gamma)\\
  &\leq
    \tau^r C(u) T \exp(T\beta\gamma).
  \end{split}
\end{align}
Here we made use of the elementary estimate
\begin{equation}
  \sum_{k=1}^n \gamma^k
  \leq
    n \gamma^n
  \leq
    n \exp(\gamma-1)^n
  =
    n\exp(n(\gamma-1)).
\end{equation}
and $\gamma-1 = \tau\beta\gamma$.
Inserting \eqref{eq:disc_error_l2_bound} into \eqref{eq:bounded_error} and
bounding $t_n$ and $\gamma$ by $T$ and $\gamma_0$, respectively, we finally get
\begin{equation}
  \|e_n\|_n^2 + \tau \|\nabla e_n\|_n^2
  \leq \tau^{2r}C(u,T,\alpha,\beta,\gamma_0).
\end{equation}

\end{proof}

\begin{corollary}
	Under the assumptions of the previous theorem and assuming $t \mapsto p(t,\cdot)\circ \Phi_t \in C((0,T);L^2_0(\Omega_0))$, it holds that for every $n \geq 1$ with $\tau n <T$,
	\begin{equation}
		\|p^n - p(t_n)\|_{L^2_0} \leq C \tau^{r},
	\end{equation}
	where $p(t_n)$ is the pressure component of the solution to the problem in Corollary \ref{cor:saddle} and $p^n$ is the pressure component to the solution to the discrete problem \eqref{eq:discreteSaddle}.
\end{corollary}
\begin{proof}
Let $e_n^u := u^n -  u(t_n)$ and $e_n^p := p^n - p(t_n)$.
One finds that $(e_n^u,e^p_n) \in H^1_0(\Omega(t_n);\R^d) \times L^2_0(\Omega(t))$ weakly satisfies
\begin{align}
	-\Delta e_n^u + \left( \left(\Vee - \w \right) \cdot \nabla \right) e_n^u + \nabla e^p_n &= \wmdw{u}(t_{n}) - \frac{u^{n} - u^{n-1}}{\tau} && \mbox{ in }\Omega(t_n),
	\\
	\Div e_n^u &= 0 && \mbox{ in } \Omega(t_n),
	\\
	e^u_n|_{\partial \Omega(t_n)} &= 0. &&
\end{align}
Therefore the result follows by standard estimates for the above \emph{stationary} Oseen equation and the estimate in \eqref{eq:assumedError}.
\end{proof}

\section{Conclusion}

In this work, we have shown the well posedness of the Oseen equation on an evolving domain, making use of an evolving space framework.
It turned out that, by considering the problem in suitable evolving spaces of divergence free functions,
it can essentially be treated like a parabolic problem on stationary domain.
With this variational formulation, we have then derived and analysed a first-order time-discretisation.

Regarding further work, uncertainty quantification would be an interesting problem to study,
in particular from the application perspective \cite{tran2019uncertainty, fleeter2020multilevel}.
For example one could model the uncertainty of the initial value, or to consider the equations on random moving domains.
This type of problem was already considered by one of the authors for elliptic PDEs on curved random domain \cite{church2020domain}
and for linear parabolic equations on random domains \cite{Dju18},
where the study of a parabolic Stokes equation on a moving domain was also mentioned.
In this setting, the solution of the equation is a random variable and one is interested in its expected value, for example.
We expect that combining our well-posedness result on moving domains
with the ideas from \cite{Dju18} for random domains
would allow for the treatment of Oseen problems on random domains.
However, this is left for future consideration.

\section*{Acknowledgements}
The authors thank Charles Elliott and Thomas Ranner for many insightful discussions on fluid problems in moving domains.

\appendix
\section{Proof of Lemmas \ref{lem:divPreserving} and \ref{lem:phiBoundedWithInverse} }\label{appendix:ProofOfLemma}

\begin{proof}[Proof of Lemma \ref{lem:divPreserving}]%
	We calculate the divergence of $\phi_t u$,
	\begin{align}%
		\Div (\phi_t u)\nonumber
		=&
		\sum_{i=1}^d \partial_i (\left(D\Phi_t u\right) \circ \Phi_t^{-1} )
		=
		\sum_{i,j=1}^d \partial_i ( (\partial_j (\Phi_t)_i u_j) \circ \Phi_t^{-1})
		\\
		=&\nonumber
		\sum_{i,j,k=1}^d \partial_i (\Phi_t^{-1})_k \partial_k ( \partial_j (\Phi_t)_i u_j) \circ \Phi_t^{-1}
		=
		\sum_{i,j,k=1}^d ((D\Phi_t^{-1})_{ki}\partial_k(\partial_j (\Phi_t)_i u_j) \circ \Phi_t^{-1}
		\\
		=&\nonumber
		\sum_{i,j,k=1}^d ((D\Phi_t^{-1})_{ki} (D\Phi_t)_{ij} \partial_k u_j + (D\Phi_t^{-1})_{ki} \partial_k (D\Phi_t)_{ij} u_j)\circ \Phi_t^{-1}
		\\
		=&
		(\Div u) \circ \Phi_t^{-1} + \sum_{i,j,k=1}^d ((D\Phi_t^{-1})_{ki} \partial_k (D\Phi_t)_{ij} u_j) \circ \Phi_t^{-1}.\label{eq:whereWeUseJacobisFormula}
	\end{align}%
	We now wish to show that the second term of \eqref{eq:whereWeUseJacobisFormula} vanishes.
	Recall Jacobi's formula, which gives the derivative of a determinant of a matrix
	\begin{equation}
		\partial_j \det(D\Phi_t)
		= \det( D\Phi_t) \sum_{i,k=1}^d(D\Phi_t^{-1} )_{ki}\partial_k(D\Phi_t)_{ik}
		= \det( D\Phi_t) \sum_{i,k=1}^d(D\Phi_t^{-1} )_{ki}\partial_k(D\Phi_t)_{ij}.
	\end{equation}
	Moreover, in Lemma \ref{lem:propertiesOfDomainAndDet} we show that $\det(D\Phi_t)$ is constant, therefore the derivative vanishes.
	Hence the second term of \eqref{eq:whereWeUseJacobisFormula} vanishes.
	This shows that $\Div(\phi_t u ) = (\Div u) \circ \Phi_t^{-1}$.

	Now we calculate the divergence of $\phi_{-t} \tilde{u}$,
	\begin{align}
		\Div(\phi_{-t}\tilde{u})
		=&\nonumber
		\sum_{i,j=1}^d \partial_i ((D\Phi_t^{-1})_{ij} \tilde{u}_j \circ \Phi_t)
		\\
		=&\nonumber
		\sum_{i,j=1}^d \left( \partial_i (D\Phi_t^{-1})_{ij} \tilde{u}_j \circ \Phi_t + \sum_{k=1}^d(D\Phi^{-1}_t)_{ij}\partial_i (\Phi_t)_k \partial_k \tilde{u}_j \circ \Phi_t \right)
		\\
		=&\label{eq:whereWeUseJacobisFormula2}
		\Div(\tilde{u})\circ \Phi_t + \sum_{i,j=1}^d\partial_i (D\Phi_t^{-1})_{ij} \tilde{u}_j \circ \Phi_t.
	\end{align}
	We again must deal with the extra term, the second term of \eqref{eq:whereWeUseJacobisFormula2}.
	It is known that the derivative of the inverse of a matrix is given by $\partial_i(D\Phi_t^{-1}) = -(D\Phi_t)^{-1} \partial_i D\Phi_t (D\Phi_t)^{-1}$,
	therefore
	\begin{equation}
		\sum_{i=1}^d \partial_i\partial_j (\Phi_t)_i
		=
		\sum_{i,k,l=1}^d (D\Phi_t^{-1})_{ik}\partial_i (D\Phi_t)_{kl} (D\Phi_t^{-1})_{lj},
	\end{equation}
	where we again note that, by Jacobi's formula and $\det(D\Phi_t^{-1}) = 1$, $\sum_{i,k=1}^d(D\Phi_t^{-1})_{ik}\partial_i (D\Phi_t)_{kl}=0$.
	This has shown $\Div(\phi_{-t} \tilde{u}) = \Div(\tilde{u}) \circ \Phi_t$ and completed the result.
\end{proof}

\begin{proof}[Proof of Lemma \ref{lem:phiBoundedWithInverse}]
Let $u \in L^2(\Omega(t);\R^d)$, we calculate
		\begin{equation}
			\int_{\Omega(t)} |\phi_t u|^2
			=
			\int_{\Omega(t)} |D\Phi_t u|^2 \circ \Phi^{-1}_t
			=
			\int_{\Omega_0} |D\Phi_t u|^2 {\det(D\Phi_t)},
		\end{equation}
		where we recall $\det(D\Phi_t) = 1$, therefore,
		\begin{equation}
			\|\phi_t u \|_{H(t)} \leq \|D\Phi_t\|_{L^\infty} \|u\|_{\HO}.
		\end{equation}
		For $\tilde{u}\in L^2(\Omega(t);\R^d)$, the following, almost identical calculation,
		\begin{equation}
			\int_{\Omega_0} |\phi_{-t} \tilde{u}|^2
			=
			\int_{\Omega(t)} |D\Phi_t^{-1} \circ \Phi_t^{-1} \tilde{u}|^2
			\leq
			\|D\Phi_t^{-1}\|_{L^\infty}^2\|\tilde{u}\|^2_{H(t)}.
		\end{equation}
		Now let $u \in H^1_0(\Omega_0;\R^d)$, we calculate the Dirichlet energy,
		\begin{equation}
			\int_{\Omega(t)} |D \left( \phi_t u\right)|^2
			=
			\int_{\Omega(t)} |D \left( D\Phi_t \circ \Phi_t^{-1} u \circ \Phi_t^{-1}\right)|^2
			=
			\int_{\Omega_0} |D \left( D\Phi_t \circ \Phi_t^{-1} u \circ \Phi_t^{-1}\right)|^2 \circ \Phi_t.
		\end{equation}
		We now look at the integrand termwise,
		\begin{equation}\begin{split}
		\partial_i \left( D\Phi_t \circ \Phi_t^{-1} u \circ \Phi_t^{-1} \right)_j \circ \Phi_t
		=&
		\sum_{k=1}^d \partial_i \left( \left(\partial_k (\Phi_t)_j u_k \right)\circ \Phi_t^{-1}\right)\circ \Phi_t
		\\
		=&
		\sum_{k,l=1}^d \left( \partial_i (\Phi^{-1}_t)_l \circ \Phi_t \partial_l \left(\partial_k(\Phi_t)_ju_k\right) \right)
		\\
		\leq&
		\|D(\Phi_t^{-1})\|_{L^\infty} \left( \|D^2\Phi_t\|_{L^\infty} |u| + \|D\Phi_t\|_{L^\infty} |Du|\right)
		\end{split}\end{equation}
		Hence
		\begin{equation}
			\int_{\Omega(t)} |D(\phi_t u)|^2
			\leq
			C\|D(\Phi_t^{-1})\|_{L^\infty}^2\left( \|D^2\Phi_t\|_{L^\infty}^2 \|u\|_{\HO}^2 + \|D\Phi_t\|_{L^\infty}^2\|Du\|_{\HO}^2 \right),
		\end{equation}
		therefore,
		\begin{equation}
			\|\phi_t u \|_{V(t)}^2
			\leq
			C \|D(\Phi_t^{-1})\|_{L^\infty}^2  \left(\|D^2\Phi_t\|_{L^\infty}^2 \|u\|_{\HO}^2 +\|D\Phi_t\|_{L^\infty}^2 \|u\|_{\VO}^2\right).
		\end{equation}
		For $\tilde{u} \in H^1(\Omega(t);\R^d)$,
		\begin{equation}
			\int_{\Omega_0} |D\left( \phi_{-t} \tilde{u}) \right)|^2
			=
			\int_{\Omega_0} | D \left( D\Phi^{-1}_t \tilde{u} \circ \Phi_t\right)|^2
			=
			\int_{\Omega(t)} | D \left( D\Phi^{-1}_t \tilde{u} \circ \Phi_t\right)|^2 \circ \Phi_t^{-1}.
		\end{equation}
		Again, looking at the integrand termiwse,
		\begin{equation}\begin{split}
			\partial_i \left( \left(D\Phi_t^{-1} \right)_{jk} \tilde{u}_k \circ \Phi_t \right) \circ \Phi_t^{-1}
			=&
			\sum_{k=1}^d \partial_i \left( \left( D(\Phi_t^{-1})_{jk}  \tilde{u}_k \right)\circ \Phi_t \right)\circ \Phi_t^{-1}
			\\
			=&
			\sum_{k,l=1}^d\partial_i (\Phi_t)_l \partial_l ( \partial_k(\Phi_t^{-1})_j \tilde{u}_k)
			\\
			\leq&
			\|D\Phi_t\|_{L^\infty} \left( \|D^2(\Phi_t^{-1})\|_{L^\infty} |\tilde{u}| + \|D(\Phi_t^{-1})\|_{L^\infty} |D\tilde{u}|\right).
		\end{split}\end{equation}
		As before, this gives
		\begin{equation}\begin{split}
			\|\phi_{-t} \tilde{u}\|_{\VO}
			\leq
			C \|D\Phi_t\|_{L^\infty}^2 \left( \|D^2(\Phi_t^{-1})\|_{L^\infty}^2 \|\tilde{u}\|_{H(t)}^2 + \|D(\Phi^{-1}_t)\|_{L^\infty}^2 \|\tilde{u}\|_{V(t)}^2\right).
		\end{split}\end{equation}
\end{proof}

\end{document}